\title{Morphisms determined by objects in triangulated categories}
\author{Henning Krause}
\address{Henning Krause\\ Fakult\"at f\"ur Mathematik\\
Universit\"at Bielefeld\\ D-33501 Bielefeld\\ Germany.}
\email{hkrause@math.uni-bielefeld.de}
\newtheorem{lem}{Lemma}[section]
\newtheorem{prop}[lem]{Proposition}
\newtheorem{cor}[lem]{Corollary}
\newtheorem{thm}[lem]{Theorem}
\theoremstyle{remark}
\newtheorem{rem}[lem]{Remark}
\theoremstyle{definition}
\newtheorem{exm}[lem]{Example}
\newtheorem{defn}[lem]{Definition}
\newtheorem{question}[lem]{Question}
\numberwithin{equation}{section}
\renewcommand{\mod}{\operatorname{\mathsf{mod}}\nolimits}
\newcommand{\proj}{\operatorname{\mathsf{proj}}\nolimits}
\newcommand{\rad}{\operatorname{rad}\nolimits}
\newcommand{\add}{\operatorname{\mathsf{add}}\nolimits}
\newcommand{\Mod}{\operatorname{\mathsf{Mod}}\nolimits}
\newcommand{\End}{\operatorname{End}\nolimits}
\newcommand{\Hom}{\operatorname{Hom}\nolimits}
\renewcommand{\Im}{\operatorname{Im}\nolimits}
\newcommand{\Ker}{\operatorname{Ker}\nolimits}
\newcommand{\Coker}{\operatorname{Coker}\nolimits}
\newcommand{\res}{\operatorname{res}\nolimits}
\newcommand{\Ext}{\operatorname{Ext}\nolimits}
\newcommand{\Tr}{\operatorname{Tr}\nolimits}
\newcommand{\uHom}{\operatorname{\underline{Hom}}\nolimits}
\newcommand{\Ab}{\mathsf{Ab}}
\newcommand{\op}{\mathrm{op}}
\newcommand{\coind}{\operatorname{coind}\nolimits}
\newcommand{\lto}{\longrightarrow}
\newcommand{\xto}{\xrightarrow}
\def\a{\alpha}
\def\b{\beta}
\def\p{\phi}
\def\Ga{\Gamma}
\def\La{\Lambda}
\def\C{{\mathsf C}}
\def\D{{\mathsf D}}
\def\Sc{{\mathsf S}}
\def\bbZ{{\mathbb Z}}
\begin{document}

\begin{abstract}
  The concept of a \emph{morphism determined by an object} provides a
  method to construct or classify morphisms in a fixed category.  We
  show that this works particularly well for triangulated categories
  having Serre duality. Another application of this concept arises
  from a reformulation of Freyd's generating hypothesis.
\end{abstract}

\maketitle
\setcounter{tocdepth}{1}
\tableofcontents

\section{Introduction}

Given a category, one may ask for a classification of all morphisms
ending in a fixed object, where two such morphisms $\a_i\colon X_i\to
Y$ $(i=1,2)$ are \emph{isomorphic} if there exists an isomorphism
$\p\colon X_1\to X_2$ such that $\a_1=\a_2\p$.  In this note we dicuss
an approach which is based on the concept of a \emph{morphism
  determined by an object}.  Originally, this concept was introduced
by Auslander \cite{Au1978} in order to give a conceptual explanation
for the existence of left and right almost split morphisms introduced
before in joint work with Reiten \cite{AuRe1975}.  Here, we show that
some of Auslander's results have an analogue for triangulated
categories. In a somewhat different direction, we reformulate a
conjecture of Freyd from stable homotopy theory in terms of morphisms
determined by objects.

\section{Morphisms determined by objects: Auslander's work}

We give a quick review of Auslander's work on morphisms determined by
objects. This leads then to a precise formulation of the classification
problem for morphisms in terms of morphisms determined by objects.

\begin{defn}[Auslander \cite{Au1978}] 
  A morphism $\a\colon X\to Y$ in some fixed category is said to be
  \emph{right determined} by an object $C$ if for every morphism
  $\a'\colon X'\to Y$ the following conditions are equivalent:
\begin{enumerate}
\item The morphism $\a'$ factors through $\a$.
\item For every morphism $\p\colon C\to X'$ the composite $\a'\p$
  factors through $\a$.
\end{enumerate}
A morphism is \emph{left determined} by $C$ if it is right
determined by $C$ when viewed as morphism in the opposite category.
\end{defn}


Fix a morphism $\a\colon X\to Y$. We denote by $\Im\Hom(C,\a)$ the
image of the induced morphism $\Hom(C,X)\to\Hom(C,Y)$ and observe that
condition (2) means \[\Im\Hom(C,\a')\subseteq\Im\Hom(C,\a).\] The
morphism $\a$ is called \emph{right minimal} if every morphism
$\p\colon X\to X$ satisfying $\a=\a\p$ is an isomorphism. 

The following elementary observation yields a reformulation of the
classification problem for morphisms ending in a fixed object.

\begin{lem}\label{le:uni}
  Let $\a_i\colon X_i\to Y$ $(i=1,2)$ be morphisms that are right
  minimal and right $C$-determined. Then $\a_1$ and $\a_2$ are
  isomorphic if and only if $\Im\Hom(C,\a_1)=\Im\Hom(C,\a_2)$.\qed
\end{lem}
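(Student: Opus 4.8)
The plan is to dispatch the two implications separately, with the forward direction being immediate from functoriality and the converse carrying the substance of the argument.

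For the forward direction, suppose $\a_1$ and $\a_2$ are isomorphic, so that there is an isomorphism $\p\colon X_1\to X_2$ with $\a_1=\a_2\p$. Applying the functor $\Hom(C,-)$ gives $\Hom(C,\a_1)=\Hom(C,\a_2)\comp\Hom(C,\p)$, and since $\p$ is an isomorphism the map $\Hom(C,\p)$ is bijective. Precomposing a map with a bijection does not alter its image, so $\Im\Hom(C,\a_1)=\Im\Hom(C,\a_2)$.

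For the converse I would exploit the $C$-determination of each morphism to produce factorizations in both directions. Assuming $\Im\Hom(C,\a_1)=\Im\Hom(C,\a_2)$, the inclusion $\Im\Hom(C,\a_1)\subseteq\Im\Hom(C,\a_2)$ is precisely condition~(2) for $\a'=\a_1$ relative to the $C$-determined morphism $\a_2$, so the equivalence $(2)\Rightarrow(1)$ shows that $\a_1$ factors through $\a_2$: there is $\p\colon X_1\to X_2$ with $\a_1=\a_2\p$. The reverse inclusion, applied to the $C$-determined morphism $\a_1$, yields $\psi\colon X_2\to X_1$ with $\a_2=\a_1\psi$. The remaining task is to upgrade $\p$ to an isomorphism.

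Here right minimality enters. Substituting one factorization into the other gives $\a_1=\a_2\p=\a_1(\psi\p)$ and $\a_2=\a_1\psi=\a_2(\p\psi)$, so both endomorphisms $\psi\p\colon X_1\to X_1$ and $\p\psi\colon X_2\to X_2$ are isomorphisms by the right minimality of $\a_1$ and $\a_2$ respectively. The main (and essentially only) obstacle is then the purely formal step of concluding that $\p$ itself is invertible: from $\psi\p$ an isomorphism, $\p$ admits the left inverse $(\psi\p)^{-1}\psi$, and from $\p\psi$ an isomorphism, $\p$ admits the right inverse $\psi(\p\psi)^{-1}$; a morphism possessing both a left and a right inverse is an isomorphism (the two inverses coincide). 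Thus $\p\colon X_1\to X_2$ is an isomorphism satisfying $\a_1=\a_2\p$, which witnesses that $\a_1$ and $\a_2$ are isomorphic and completes the proof.
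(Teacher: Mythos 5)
Your proof is correct and is exactly the elementary argument the paper has in mind: the lemma is stated with no written proof (it is marked as an ``elementary observation'' and closed with \qed), the intended route being precisely your combination of the reformulation of condition~(2) as $\Im\Hom(C,\a')\subseteq\Im\Hom(C,\a)$, the two resulting factorizations, and right minimality to make $\p$ invertible. Nothing is missing; your two-sided-inverse step correctly handles the only formal subtlety.
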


Right almost split morphisms provide an important class of examples of
right determined morphisms.

\begin{exm}[Auslander {\cite[\S II.2]{Au1978}}]\label{ex:ass}
  A morphism $\a\colon X\to Y$ in some additive category is
  \emph{right almost split} (that is, $\a$ is not a retraction and
  every morphism $X'\to Y$ that is not a retraction factors through
  $\a$) if and only if $\Ga=\End(Y)$ is a local ring, $\a$ is right
  determined by $Y$, and $\Im\Hom(Y,\a)=\rad\Ga$.
\end{exm}

Let us recall two of the main results from \cite{Au1978,Au1978b}. Fix
a ring $\La$ and denote by $\Mod\La$ the category of
$\La$-modules. The full subcategory formed by all finitely presented
$\La$-modules is denoted by $\mod\La$.

\begin{thm}[Auslander  {\cite[Theorem~I.3.19]{Au1978}}]\label{th:au}
  Let $C$ and $Y$ be $\La$-modules and suppose that $C$ is finitely
  presented. Given an $\End_\La(C)$-submodule
  $H\subseteq\Hom_\La(C,Y)$, there exists, up to isomorphism, a unique
  right minimal morphism $\a\colon X\to Y$ in $\Mod\La$ which is right
  $C$-determined and satisfies $\Im\Hom_\La(C,\a)=H$.\qed
\end{thm}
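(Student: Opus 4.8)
The uniqueness is already in hand: two right minimal, right $C$-determined morphisms into $Y$ with the same image under $\Hom_\La(C,-)$ are isomorphic by Lemma~\ref{le:uni}. So the entire content is the \emph{existence} of a right minimal $\a\colon X\to Y$ that is right $C$-determined and satisfies $\Im\Hom_\La(C,\a)=H$. I would produce such an $\a$ first without the minimality, and only afterwards pass to its right minimal version.

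The plan is to work in the category of additive functors on $\mod\La$ and to write down the subfunctor of $\Hom_\La(-,Y)$ that $\a$ is forced to cut out. Put $\Ga=\End_\La(C)$ and define, for every $\La$-module $X'$,
\[ F(X')=\{\xi\colon X'\to Y \mid \Im\Hom_\La(C,\xi)\subseteq H\}. \]
Since $H$ is a $\Ga$-submodule this is a subfunctor: if $\xi\in F(X')$ and $\s\colon X''\to X'$, then $\xi\s\p\in H$ for every $\p\colon C\to X''$, so $\xi\s\in F(X'')$. Two formal features of $F$ are exactly what the theorem demands. First, $F(C)=H$, because $\Im\Hom_\La(C,\xi)\subseteq H$ forces $\xi=\xi\comp\id_C\in H$, while $\xi\in H$ conversely gives $\xi\Ga\subseteq H$. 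Second, \emph{any} morphism $\a\colon X\to Y$ with $\Im\Hom_\La(-,\a)=F$ is automatically right $C$-determined with the correct image: $\a'$ factors through $\a$ iff $\a'\in\Im\Hom_\La(-,\a)(X')=F(X')$ iff $\Im\Hom_\La(C,\a')\subseteq H=F(C)=\Im\Hom_\La(C,\a)$, which is precisely the equivalence of conditions (1) and (2).

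The problem thus reduces to realizing $F$ as $\Im\Hom_\La(-,\a)$ for an \emph{honest} morphism $\a$ into $Y$; equivalently, to producing a projective presentation $\Hom_\La(-,X)\to\Hom_\La(-,Y)\to\Hom_\La(-,Y)/F\to 0$ in the functor category, whose left-hand map is $\Hom_\La(-,\a)$ by Yoneda. To make this explicit I would exhibit $F$ as a kernel: it is the kernel of the natural transformation $\Hom_\La(-,Y)\to\Hom_\Ga(\Hom_\La(C,-),\Hom_\La(C,Y)/H)$ that sends $\xi$ to $\p\mapsto\overline{\xi\p}$. Feeding a finite presentation $\La^m\to\La^n\to C\to 0$ into the identification $\Hom_\La(C,N)=\Ker(N^n\to N^m)$, natural in $N$, rewrites this transformation in concrete terms and lets one read off the required presentation from the presentation of $C$ together with a generating family of $H$ over $\Ga$. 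Once $\a\colon X\to Y$ is obtained, I finish by cancelling a direct summand of $X$ on which $\a$ vanishes to reach its right minimal version, noting that this alters neither $\Im\Hom_\La(C,-)$ nor the property of being right $C$-determined, and then invoke Lemma~\ref{le:uni}.

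The main obstacle is exactly this finiteness step: showing that $F$ is realizable by a genuine morphism, i.e.\ that $\Hom_\La(-,Y)/F$ admits a projective presentation in the functor category rather than only a formal, proper-class-indexed one. This is where the hypothesis that $C$ is finitely presented is indispensable, since it is what makes $\Hom_\La(C,-)$ a coherent functor (commuting with filtered colimits) and thereby forces $F$ to have the requisite finiteness; for a general $C$ the construction breaks down. It is worth stressing that the realizing $\a$ is genuinely subtler than the naive guess $(h_i)\colon C^{(I)}\to Y$ assembled from generators $h_i$ of $H$: already for $\La=\bbZ$, $C=\bbZ/2$, $Y=\bbZ/4$ and $H=0$, that naive morphism is $0$, whereas the correct right $C$-determined morphism with image $0$ is multiplication by $2$ on $\bbZ/4$. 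This is precisely the phenomenon that the subfunctor $F$ encodes and that the finiteness step is designed to capture.
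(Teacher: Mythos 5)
Your reductions are fine as far as they go: uniqueness via Lemma~\ref{le:uni}, the subfunctor $F$ with $F(C)=H$, and the identification of $F$ as the kernel of the natural map $\eta\colon\Hom_\La(-,Y)\to\Hom_\Ga(\Hom_\La(C,-),\Hom_\La(C,Y)/H)$ all parallel the machinery the paper sets up in its Section~3 (the functor $\coind_C$ and Lemma~\ref{le:det}). But the heart of the theorem --- realizing $F$ as $\Im\Hom_\La(-,\a)$ for an honest morphism $\a$ --- is exactly where you stop proving and start gesturing, and the gesture does not close. Coherence of $\Hom_\La(C,-)$ shows at best that $F$ is determined by its values on finitely presented modules; but if you assemble all $\xi\in F(P)$ with $P$ finitely presented into one morphism $\a\colon\coprod P\to Y$, the assertion that an \emph{arbitrary} $\xi'\in F(X')$ then factors through $\a$ is precisely a right-determinedness statement of the kind being proved (a phantom-morphism obstruction), so this route is circular; and ``reading off a presentation of $\Hom_\La(-,Y)/F$ from a presentation of $C$ and generators of $H$'' is not a construction. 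The mechanism the paper points to (following Auslander, whose theorem it quotes) is genuinely homological and absent from your argument: embed $\uHom_\La(C,Y)/H$ into an \emph{injective} $\Ga$-module $I$ and invoke the Auslander--Reiten formula $D\uHom_\La(C,-)\cong\Ext^1_\La(-,D\Tr C)$ with $D=\Hom_\Ga(-,I)$; then $\eta$ corresponds to an extension $0\to D\Tr C\to X\to Y\to 0$, and the connecting map in the long exact sequence is what realizes $\Ker\eta$ as $\Im\Hom_\La(-,\a)$ (with a modification for non-epimorphisms). Note that you use $\Hom_\La(C,Y)/H$ itself as the target module, which is generally not injective over $\Ga$, so even the Ext-interpretation is unavailable in your setup; the injective envelope is moreover what delivers right minimality, whereas your minimalization by splitting off a summand of $X$ is unjustified in $\Mod\La$ --- Remark~\ref{re:min} depends on semiperfect endomorphism rings and is a Hom-finite statement.

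Your closing example is in fact wrong, and the error is symptomatic of the missing step. For $\La=\bbZ$, $C=\bbZ/2$, $Y=\bbZ/4$, $H=0$, multiplication by $2$ on $\bbZ/4$ has $\Im\Hom_\La(C,\a)=0$ and is right minimal, but it is \emph{not} right $C$-determined: the canonical surjection $\xi\colon\bbZ\to\bbZ/4$ satisfies $\Im\Hom_\La(\bbZ/2,\xi)=0$ (since $\Hom_\La(\bbZ/2,\bbZ)=0$), yet $\xi$ does not factor through multiplication by $2$ (even $\bbZ/8\twoheadrightarrow\bbZ/4$ fails to factor, so the example is wrong already among finite modules). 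The correct right minimal right $C$-determined morphism is the epimorphism $\bbZ/8\to\bbZ/4$ coming from the nonsplit extension $0\to\bbZ/2\to\bbZ/8\to\bbZ/4\to 0$ --- that is, precisely the output of the $D\Tr C$-extension construction your proof lacks.
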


\begin{thm}[Auslander {\cite[Theorem~2.6]{Au1978b}}]
  Suppose that $\La$ is an Artin algebra and let $\a\colon X\to Y$ be
  a morphism between finitely presented $\La$-modules. Denote by $\Tr
  D(\Ker\a)$ the transpose of the dual of the kernel of $\a$ and by
  $P(\Coker\a)$ a projective cover of the cokernel of $\a$.  Then $\a$
  is right determined as a morphism in $\mod\La$ by $\Tr
  D(\Ker\a)\amalg P(\Coker\a)$.\footnote{This result is not correct as
    stated; the term $P(\Coker\a)$ needs to be modified, as pointed
    out by Ringel in \cite{Ri2011}.}\qed
\end{thm}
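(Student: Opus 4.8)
The plan is to verify the two conditions of the definition directly, writing $K=\Ker\a$, $N=\Coker\a$, and $I=\Im\a$, and factoring $\a$ as $X\xto{\pi}I\xto{\iota}Y$ with $\pi$ an epimorphism and $\iota$ a monomorphism (all of $K$, $I$, $N$ lie in $\mod\La$ since $\La$ is an Artin algebra, so $\Tr D(K)$ and $P(N)$ are defined). The implication (1)$\Rightarrow$(2) is immediate: if $\a'=\a\psi$ then $\a'\p=\a(\psi\p)$ factors through $\a$ for every $\p$. The substance is (2)$\Rightarrow$(1). For a given $\a'\colon X'\to Y$ I would split the desired factorization into two independent tasks: (A) show $\Im\a'\subseteq\Im\a$, so that $\a'=\iota\b$ for a unique $\b\colon X'\to I$; and (B) show that such a $\b$ lifts along $\pi$, which then yields $\a'=\iota\pi\psi=\a\psi$. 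Task (A) should be controlled by the summand $P(N)$ and task (B) by the summand $\Tr D(K)$; since condition (2) quantifies over all $\p\colon C\to X'$, in particular over those factoring through either summand, both sub-hypotheses are available.

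For (A), apply $\Hom(P(N),-)$ to the exact sequence $X\xto{\a}Y\xto{c}N\to0$, where $c$ is the canonical surjection. Because $P(N)$ is projective, every map $P(N)\to I$ lifts along $\pi$, whence $\Im\Hom(P(N),\a)=\Hom(P(N),I)=\Ker\bigl(c_*\colon\Hom(P(N),Y)\to\Hom(P(N),N)\bigr)$, so the hypothesis $\Im\Hom(P(N),\a')\subseteq\Im\Hom(P(N),\a)$ says precisely that $c\a'\phi=0$ for every $\phi\colon P(N)\to X'$. The goal is to upgrade this to $c\a'=0$, i.e.\ $\Im\a'\subseteq\Im\a$. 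This is exactly the step that is \emph{not} valid with the projective cover alone: a nonzero map $X'\to N$ whose image lies deep inside $\rad N$ need not be detected by maps out of $P(N)$, and this is why the term $P(\Coker\a)$ must be enlarged, as Ringel observes in \cite{Ri2011}. I expect this to be the main obstacle — pinning down the smallest object through which every map into $N$ is detected — after which (A) becomes a formal consequence of the projectivity of that object.

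For (B), the engine is Auslander--Reiten duality. Writing $L=\Tr D(K)=\tau^{-1}K$, the Auslander--Reiten formula furnishes an isomorphism $\Ext^1_\La(M,K)\cong D\,\uHom_\La(L,M)$ natural in $M$. The extension $0\to K\to X\xto{\pi}I\to0$ defines a class $\eta\in\Ext^1_\La(I,K)\cong D\,\uHom_\La(L,I)$, that is, a linear form $\tilde\eta$ on $\uHom_\La(L,I)$, and the obstruction to lifting $\b$ along $\pi$ is the pullback $\b^*\eta\in\Ext^1_\La(X',K)\cong D\,\uHom_\La(L,X')$, which by naturality is the form $\psi\mapsto\tilde\eta(\b\psi)$. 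Thus $\b$ lifts along $\pi$ if and only if $\tilde\eta(\b\psi)=0$ for every $\psi\colon L\to X'$. On the other hand, unwinding the hypothesis along the summand $L$: since $\iota$ is monic, $\Im\Hom(L,\a')\subseteq\Im\Hom(L,\a)$ says that each $\b\psi\colon L\to I$ factors through $\pi$, i.e.\ $(\b\psi)^*\eta=0$; and evaluating the associated form on $\id_L$ yields exactly $\tilde\eta(\b\psi)=0$. Hence the hypothesis delivers precisely the lifting criterion, and $\b$ lifts, completing (B).

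Finally I would assemble the pieces in the order (A) then (B): condition (2) restricted to $P(N)$ gives $\Im\a'\subseteq\Im\a$ and hence the factorization $\a'=\iota\b$; condition (2) restricted to $\Tr D(K)$ then produces a lift $\psi$ of $\b$ along $\pi$, so that $\a'=\a\psi$ and (1) holds. The routine verifications — exactness after applying $\Hom(P(N),-)$, naturality of the Auslander--Reiten isomorphism, and the identification of the pullback class with $\b^*\eta$ — I would relegate to the standard references. The genuinely delicate point, and the one flagged in the footnote, is the cokernel detection in (A); everything else is bookkeeping around the Auslander--Reiten formula.
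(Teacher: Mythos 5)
Your attempt cannot be matched against a proof in the paper, because the paper gives none: the theorem is quoted from Auslander with a \qed, and the footnote explicitly records that the statement is \emph{false} as written, the term $P(\Coker\a)$ needing modification after Ringel. So no complete blind proof exists, and your argument stalls at the only place it can: step (A). Your diagnosis there is exactly right, and it is exactly the footnote's point. The hypothesis restricted to $P(N)$ yields only $c\a'\p=0$ for all $\p\colon P(N)\to X'$, and this does not force $c\a'=0$. Concretely: take $\a\colon 0\to Y$ with $Y$ uniserial of length two, top $a$ and socle $b$, $a\ncong b$ (say $Y=P(a)$ over the path algebra of $A_2$). Then $\Ker\a=0$, so the proposed determiner reduces to $P(Y)=P(a)$, and $\Hom_\La(P(a),b)=0$; hence condition (2) holds vacuously for the inclusion $\a'\colon b\to Y$ while (1) fails. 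Ringel's correction replaces $P(\Coker\a)$ by $P(\soc\Coker\a)$, and with that replacement your step (A) closes precisely as you predicted: if $c\a'\neq0$, choose a simple $S\subseteq\soc\Im(c\a')\subseteq\soc N$, note $P(S)$ is a summand of $P(\soc N)$, and lift the projective cover $P(S)\twoheadrightarrow S$ along the epimorphism $(c\a')^{-1}(S)\to S$ to obtain $\p\colon P(S)\to X'$ with $c\a'\p\neq0$; since anything factoring through $\a$ composes to zero with $c$, this contradicts (2).

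Step (B), by contrast, is correct and complete: the identification of the obstruction to lifting $\b$ along $\pi$ with the pullback class $\b^*\eta\in\Ext^1_\La(X',K)$, the use of the formula $\Ext^1_\La(-,K)\cong D\uHom_\La(\Tr DK,-)$ (the paper's displayed Auslander--Reiten formula with $C=\Tr DK$, harmless up to injective summands of $K$), and the evaluation-at-$\id_L$ trick converting ``every $\b\psi$ factors through $\pi$'' into the vanishing of the linear form representing $\b^*\eta$ all check out, including the passage to stable Hom (the hypothesis gives vanishing on all actual morphisms $L\to X'$, hence on all stable classes). This is moreover the very mechanism the paper indicates when it remarks that both of Auslander's theorems rest on the Auslander--Reiten formula; the paper's sketch runs the construction in the opposite direction (an injective envelope of $\uHom_\La(C,Y)/H$ over $\End_\La(C)$, then the extension $0\to D\Tr C\to X\to Y\to0$), whereas you run it as an obstruction-theoretic duality computation. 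In short: your (B) is a sound reconstruction of Auslander's argument, while (A) contains the genuine --- and unavoidable --- gap, which you located at precisely the flaw the footnote attributes to Ringel; stating the corrected determiner $\Tr D(\Ker\a)\amalg P(\soc\Coker\a)$ and running your own plan would turn the attempt into a full proof of the corrected theorem.
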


Note that the proofs of both theorems are based on the
\emph{Auslander--Reiten formula}
\[D\uHom_\La(C,-)\cong\Ext^1_\La(-,D\Tr C),\] where $C$ is supposed to
be a finitely presented $\La$-module and $D=\Hom_\Ga(-,I)$ with
$\Ga=\End_\La(C)$ and $I$ an injective $\Ga$-module; see
\cite[Proposition~I.3.4]{Au1978}.  For instance, a $C$-determined
epimorphism corresponding to a $\Ga$-submodule
$H\subseteq\Hom_\La(C,Y)$ is obtained by choosing an injective
envelope $\uHom_\La(C,Y)/H\to I$ over $\Ga$ and taking the morphism
ending at $Y$ from the corresponding extension $0\to D\Tr C\to X\to
Y\to 0$. A modification of this construction takes care of morphisms
that are not epimorphisms.

In the following section, we move from module categories to
dualising varieties and obtain similar results using an analogue
of the Auslander--Reiten formula.

Motivated by Auslander's results, the classification problem for
morphisms ending in a fixed object may be formulated more precisely 
in terms of the following definition.

\begin{defn}\label{de:det}
An additive category $\C$ is said \emph{to have right determined
    morphisms} if for every  object $Y\in\C$ the following holds:
\begin{enumerate}
\item Given an object $C\in\C$ and an
  $\End_\C(C)$-submodule  $H\subseteq\Hom_\C(C,Y)$, there
is  a right $C$-determined morphism $\a\colon X\to Y$ with
  $\Im\Hom_\C(C,\a)=H$.
\item Every morphism ending in $Y$ is right
  determined by an object in $\C$.
\end{enumerate}
\end{defn}

In categorical terms, this definition formulates properties of the
\emph{slice category} $\C/Y$ over a fixed object $Y\in\C$.  Let us
illustrate this by looking at categories having few morphisms.

\begin{exm}
  Let $\C$ be a partially ordered set, viewed as a category, and fix a
  morphism $\a\colon x\to y$, which means that $x\le y$. If $x=y$, then $\a$ is
  right determined by every object of $\C$. If $x\neq y$, then $\a$ is
  right determined by an object $c\in\C$ if and only if 
 there exists a unique minimal element in 
\[\C_\a=\{c\in\C\mid c\not\le x,c\le y\}.\]
In that case $c=\inf\C_\a$. Thus in $(\mathbb Z,\le)$ all morphisms
are determined by objects, while in $(\mathbb Q,\le)$ only identity
morphisms are determined by some object.
\end{exm}

\section{Dualising varieties}

Dualising varieties were introduced by Auslander and Reiten in
\cite{AuRe1974}. They form a convenient setting for studying morphisms
determined by objects. We will see that such categories have right
determined morphisms in the sense of Definition~\ref{de:det}.

Throughout this work $k$ denotes a commutative artinian ring with
radical $\mathfrak r$. We fix a $k$-linear additive category $\C$
which is \emph{Hom-finite}, that is, the $k$-module $\Hom_\C(X,Y)$ has
finite length for all objects $X,Y$ in $\C$. Suppose also that $\C$ is
essentially small and idempotent complete.

\subsection*{Dualising varieties}

Let $\mod k$ denote the category of finitely presented $k$-modules and
fix an injective envelope $E=E(k/\mathfrak r)$ over $k$. This provides
the duality
\[D=\Hom_k(-,E)\colon\mod k\lto\mod k.\]

Denote by $(\C,\mod k)$ the category of $k$-linear functors $\C\to\mod
k$.  The basic tools are the fully faithful \emph{Yoneda functor}
\[\C\lto (\C^\op,\mod k),\quad X\mapsto\Hom_\C(-,X),\]
and the duality
\[(\C,\mod k)^\op\stackrel{\sim}\lto (\C^\op,\mod k),\quad
F\mapsto DF.\]

Recall that an additive functor $F\colon\C^\op\to\mod k$ is \emph{finitely presented}
if it fits into an exact sequence
\[\Hom_\C(-,X)\lto\Hom_\C(-,Y)\lto F\lto 0.\]
We denote by $\mod\C$ the full subcategory of $(\C^\op,\mod k)$ formed
by all finitely presented functors.

\begin{defn}[Auslander--Reiten \cite{AuRe1974}]
  A $k$-linear additive Hom-finite essentially small and idempotent
  complete category $\C$ is called \emph{dualising $k$-variety} if the
  assignment $F\mapsto DF$ induces an equivalence
\[(\mod\C)^\op\stackrel{\sim}\lto\mod(\C^\op).\]
\end{defn}

A morphism $X\to Y$ is a \emph{weak kernel} of a morphism $Y\to Z$ in
$\C$ if it induces an exact sequence
\[\Hom_\C(-,X)\lto\Hom_\C(-,Y)\lto\Hom_\C(-,Z).\]
A \emph{weak cokernel} is defined analogously.  The following lemma is
well-known and easily proved.
\begin{lem}\label{le:modC}
  The category $\mod\C$ is an additive category with cokernels; it is
  abelian if and only if $\C$ has weak kernels.\qed
\end{lem}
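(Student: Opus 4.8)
The plan is to work inside the ambient functor category $\A=(\C^\op,\mod k)$, which is abelian because $\mod k$ is, and to identify $\mod\C$ with its full subcategory of finitely presented objects. Two facts I would record at the outset: each representable $\Hom_\C(-,X)$ is a projective object of $\A$ (by Yoneda, since the evaluation $F\mapsto F(X)$ is exact), and $\Hom_\C(-,X\oplus X')\cong\Hom_\C(-,X)\oplus\Hom_\C(-,X')$, so finite coproducts of representables are representable. Closure of $\mod\C$ under finite direct sums and the presence of a zero object are then immediate, so $\mod\C$ is additive. For cokernels, I would show that the cokernel of $f\colon F\to G$ in $\mod\C$, formed in $\A$, is again finitely presented: choosing an epimorphism $p\colon\Hom_\C(-,X_0)\to F$ and a presentation $\Hom_\C(-,Y_1)\xto{v}\Hom_\C(-,Y_0)\xto{q}G\to 0$, projectivity lets me lift $fp$ to $u\colon\Hom_\C(-,X_0)\to\Hom_\C(-,Y_0)$ with $qu=fp$, and a short diagram chase identifies $\Coker f$ with the cokernel of $[u\ v]\colon\Hom_\C(-,X_0)\oplus\Hom_\C(-,Y_1)\to\Hom_\C(-,Y_0)$. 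Hence $\mod\C$ has cokernels and they agree with those of $\A$.

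For the equivalence I would invoke the standard principle that a full additive subcategory of an abelian category closed under both kernels and cokernels is itself abelian. Since closure under cokernels is settled, the weak-kernel hypothesis must be matched with closure under kernels. Assume first that $\C$ has weak kernels. The core observation is that the kernel in $\A$ of a morphism between representables is finitely generated: a weak kernel $w\colon W\to X$ of $\phi\colon X\to Y$ makes $\Hom_\C(-,W)\to\Hom_\C(-,X)\to\Hom_\C(-,Y)$ exact, so $\Ker\Hom_\C(-,\phi)=\Im\Hom_\C(-,w)$, and the same applies to maps between finite sums of representables since these arise from a single morphism of $\C$. Given $f\colon F\to G$ in $\mod\C$ with $p\colon\Hom_\C(-,X_0)\to F$ epi, I would lift $fp$ along a presentation of $G$ to $h\colon\Hom_\C(-,X_0)\to\Hom_\C(-,Y_0)$ and realise $\Ker(fp)$ as the image of the pullback $\Hom_\C(-,X_0)\times_{\Hom_\C(-,Y_0)}\Hom_\C(-,Y_1)$, which is the kernel of a map between representables, hence finitely generated; as $p$ carries $\Ker(fp)$ onto $\Ker f$, the latter is finitely generated. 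Applying this once more to a morphism from a representable onto $\Ker f$ shows the relations are finitely generated as well, so $\Ker f$ is finitely presented. Thus $\mod\C$ is closed under kernels and therefore abelian.

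Conversely, assume $\mod\C$ is abelian and fix $\phi\colon X\to Y$. I would form the kernel $k\colon K\to\Hom_\C(-,X)$ of $\Hom_\C(-,\phi)$ in $\mod\C$, choose an epimorphism $\pi\colon\Hom_\C(-,W)\to K$, and let $w\colon W\to X$ be the morphism with $\Hom_\C(-,w)=k\pi$ (Yoneda), so that $\phi w=0$. To verify $w$ is a weak kernel, take $g\colon Z\to X$ with $\phi g=0$; then $\Hom_\C(-,\phi)\circ\Hom_\C(-,g)=0$, so $\Hom_\C(-,g)$ factors through $k$ in $\mod\C$ by the universal property of the kernel. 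Because cokernels agree with those of $\A$, the map $\pi$ is an epimorphism in $\A$, and projectivity of $\Hom_\C(-,Z)$ lets me lift this factorisation along $\pi$; reading the resulting identity back through Yoneda gives $g=ws'$ for some $s'\colon Z\to W$, so $w$ is a weak kernel of $\phi$.

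I expect the cokernel computation and the pullback step to be the main technical points, while the subtlest conceptual issue lies in the converse, where one must pass between the kernel computed inside $\mod\C$ and genuine factorisations in $\C$. The device that makes this work is that epimorphisms in $\mod\C$ coincide with those in $\A$ (a consequence of cokernels agreeing), which together with projectivity of the representables in $\A$ permits the lifting along $\pi$.
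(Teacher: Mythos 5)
Your proof is correct and complete: the reduction of cokernels to $\Coker[u\ v]$, the pullback argument showing kernels of maps in $\mod\C$ are finitely generated (then finitely presented) when $\C$ has weak kernels, and the converse via lifting through $\pi$ using projectivity of representables and the agreement of epimorphisms in $\mod\C$ and $(\C^\op,\mod k)$ are all sound. The paper offers no proof at all (the lemma is stated as ``well-known and easily proved'' with a \qedsymbol), and your argument is exactly the standard Freyd--Auslander one it alludes to.
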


This yields the following reformulation of the definition of a
dualising variety.

\begin{lem}\label{le:dual}
  Let $\C$ be a $k$-linear additive Hom-finite essentially small and
  idempotent complete category. Then $\C$ is a dualising $k$-variety
  if and only if the following holds:
\begin{enumerate}
\item The category $\C$ has weak kernels and weak cokernels.
\item The functors $D\Hom_\C(-,C)$ and $D\Hom_\C(C,-)$ are finitely
  presented for all objects $C$ in $\C$.\qed
\end{enumerate}
\end{lem}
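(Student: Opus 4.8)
The plan is to exploit that the duality $D\colon(\C,\mod k)^{\op}\xrightarrow{\sim}(\C^{\op},\mod k)$ on the \emph{full} functor categories is already an equivalence, with $D\comp D\cong\id$ since $D$ is a duality on $\mod k$, so that the whole statement reduces to controlling finite presentation and the behaviour of kernels. Being a dualising variety means precisely that $D$ carries $\mod\C$ into $\mod(\C^{\op})$ and back. The mechanism is that $D$ is contravariant and exact (as $E$ is injective over $k$): applying it to a presentation $\Hom_\C(-,X)\lto\Hom_\C(-,Y)\lto F\lto 0$ of an object $F\in\mod\C$ yields a left exact sequence $0\lto DF\lto D\Hom_\C(-,Y)\lto D\Hom_\C(-,X)$, exhibiting $DF$ as a kernel of a map between the functors governed by condition~(2).

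For the implication ``(1)+(2) $\Rightarrow$ dualising variety'' I would first invoke Lemma~\ref{le:modC}: the weak kernels of $\C$ make $\mod\C$ abelian, and the weak cokernels make $\C^{\op}$ have weak kernels, so $\mod(\C^{\op})$ is abelian as well; as is standard, in each case the kernel of a morphism of finitely presented functors is again finitely presented, so each subcategory is closed under kernels in its ambient functor category. Given $F\in\mod\C$ as above, condition~(2) places $D\Hom_\C(-,X)$ and $D\Hom_\C(-,Y)$ in $\mod(\C^{\op})$, and closure under kernels then gives $DF\in\mod(\C^{\op})$; symmetrically, using the other half of~(2), $D$ sends $\mod(\C^{\op})$ into $\mod\C$. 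Since $D$ is an anti-equivalence of the ambient categories with $D\comp D\cong\id$, these two restrictions are mutually quasi-inverse, so $D$ induces the required equivalence $(\mod\C)^{\op}\xrightarrow{\sim}\mod(\C^{\op})$.

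For the converse, condition~(2) is immediate: representables lie in $\mod\C$ and in $\mod(\C^{\op})$, and the equivalence $D$ sends them to finitely presented functors, giving finite presentation of $D\Hom_\C(-,C)$ and of $D\Hom_\C(C,-)$. For condition~(1) I would note that $D$ gives $\mod\C\cong(\mod(\C^{\op}))^{\op}$, while $\mod(\C^{\op})$ always has cokernels by Lemma~\ref{le:modC}; hence $\mod\C$ has kernels, and symmetrically so does $\mod(\C^{\op})$. It then remains to deduce from ``$\mod\C$ has kernels'' that $\C$ has weak kernels (and dually, from ``$\mod(\C^{\op})$ has kernels'' that $\C$ has weak cokernels).

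The main obstacle is exactly this last implication, which I expect to settle by a Yoneda argument. Given $f\colon X\to Y$, form the kernel $K\hookrightarrow\Hom_\C(-,X)$ of $f_*$ in $\mod\C$, choose an epimorphism $\Hom_\C(-,W)\twoheadrightarrow K$ afforded by finite generation, and let $W\to X$ be the morphism it represents. Because cokernels in $\mod\C$ agree with those computed in the ambient functor category, epimorphisms in $\mod\C$ are objectwise surjective, which forces $\Hom_\C(-,W)\lto\Hom_\C(-,X)\lto\Hom_\C(-,Y)$ to be exact; that is, $W\to X$ is a weak kernel of $f$. Lemma~\ref{le:modC} then upgrades this to $\mod\C$ being abelian. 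Everything else is bookkeeping with the exactness of $D$ and the agreement of (co)kernels between $\mod\C$ and the surrounding functor category.
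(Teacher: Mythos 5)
Your proof is correct and is essentially the argument the paper has in mind: the paper states Lemma~\ref{le:dual} without proof, as an easy reformulation via Lemma~\ref{le:modC}, and your argument --- exactness of $D$ with $D\comp D\cong\id$, closure of $\mod\C$ and $\mod(\C^\op)$ under kernels in the presence of weak (co)kernels, and the extraction of weak kernels in $\C$ from kernels in $\mod\C$ --- is the standard way to fill it in. The only tersely stated step is why the image of $\Hom_\C(-,W)\to\Hom_\C(-,X)$ is the full objectwise kernel of $\Hom_\C(-,f)$, but this is exactly the one-line Yoneda lifting you allude to (any $a\colon A\to X$ with $fa=0$ gives $f_*a_*=0$, hence $\Hom_\C(-,A)$ factors through the kernel $K$ taken in $\mod\C$, so $K(A)\twoheadrightarrow\{a\mid fa=0\}$), so there is no genuine gap.
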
 

\begin{exm}[{Auslander--Reiten \cite[\S2]{AuRe1974}}]
Let $\La$ be an Artin $k$-algebra. Then the category $\proj\La$ of finitely
generated projective $\La$-modules is a dualising $k$-variety.

If $\C$ is a dualising $k$-variety, then $\mod\C$ is a dualising
$k$-variety. In particular, $\mod\La=\mod(\proj\La)$ is a dualising $k$-variety.
\end{exm}

\subsection*{Restriction}
For an object $C$ in $\C$ and $\Ga=\End_\C(C)$, consider the
restriction functor
\[(\C^\op,\mod k)\lto\mod\Ga,\quad F\mapsto F(C)\]
and its right adjoint
\[\coind_C\colon\mod\Ga\lto (\C^\op,\mod k), \quad I\mapsto
\Hom_\Ga(\Hom_\C(C,-),I).\]
Note that Yoneda's lemma gives for each $Y\in\C$ the adjointness isomorphism
\begin{equation}\label{eq:Yon}
\Hom_\C(\Hom_\C(-,Y),\coind_C I)\xto{\sim}\Hom_\Ga(\Hom_\C(C,Y),I),
\quad \eta\mapsto \eta_*
\end{equation}
with
\begin{equation}
\eta_X(\a)=\eta_*\Hom_\C(C,\a)\quad\text{for all}\quad
X\in\C,\,\a\in\Hom_\C(X,Y).
\end{equation}
In particular,
\begin{equation}\label{eq:eta}
\eta\Hom_\C(-,\a)=0\quad\iff\quad\eta_*\Hom_\C(C,\a)=0.
\end{equation}

\begin{lem}\label{le:Yon}
Let $I=D\Ga$. Then
$\coind_C I\cong D\Hom_\C(C,-)$.
\end{lem}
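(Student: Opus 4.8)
The plan is to compute $\coind_C I$ objectwise and match it with $D\Hom_\C(C,-)$. Fix $X\in\C$ and abbreviate $M=\Hom_\C(C,X)$, viewed as a module over $\Ga=\End_\C(C)$ through the action on the source $C$. By the definition of $\coind_C$ we have $(\coind_C I)(X)=\Hom_\Ga(M,D\Ga)$, so the whole statement reduces to constructing an isomorphism
\[\Hom_\Ga(M,D\Ga)\xto{\sim}DM=\Hom_k(M,E)\]
that is natural in $X$.

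The heart of the matter is the adjunction attached to the ring homomorphism $k\to\Ga$: the coinduction functor $\Hom_k(\Ga,-)$ is right adjoint to restriction, and $D\Ga=\Hom_k(\Ga,E)$ is exactly the coinduction of the $k$-module $E$. This yields the natural isomorphism
\[\Hom_\Ga(M,\Hom_k(\Ga,E))\cong\Hom_k(M\otimes_\Ga\Ga,E)\cong\Hom_k(M,E),\]
where the last step uses $M\otimes_\Ga\Ga\cong M$. Explicitly, the forward map sends $\p\mapsto(m\mapsto\p(m)(1))$, with inverse $\psi\mapsto\big(m\mapsto(\g\mapsto\psi(m\g))\big)$ when $M$ is treated as a right $\Ga$-module. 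In any case this is an isomorphism of $k$-modules, natural in the $\Ga$-module argument $M$.

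Finally I would promote naturality in $M$ to naturality in $X$. Since $X\mapsto\Hom_\C(C,X)$ is a (covariant) functor $\C\to\mod\Ga$ and the displayed isomorphism is natural in its module argument, composing the two produces an isomorphism of functors $\coind_C I\xto{\sim}D\Hom_\C(C,-)$ in $(\C^\op,\mod k)$; both functors take values in $\mod k$ because each $\Hom_\C(C,X)$ has finite length over $k$ by Hom-finiteness. I expect the only delicate point to be the bookkeeping of the left/right $\Ga$-structures --- on $M$, on the copy of $\Ga$ inside $D\Ga$, and on the various Hom-groups --- which have to be chosen compatibly for the adjunction above to be an honest isomorphism of $\Ga$-modules. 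Once those conventions are fixed, the verification of naturality is a routine diagram chase.
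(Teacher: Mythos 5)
Your proof is correct and takes essentially the same route as the paper: both reduce the claim to the adjunction isomorphism $\Hom_\Ga(M,\Hom_k(\Ga,E))\cong\Hom_k(M\otimes_\Ga\Ga,E)\cong DM$ applied objectwise with $M=\Hom_\C(C,X)$, the paper compressing this into a one-line computation. Your additional remarks on explicit maps, left/right module bookkeeping, and naturality in $X$ are exactly the details the paper leaves implicit, so nothing is missing.
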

\begin{proof}
One computes
\begin{align*}
\Hom_\Ga(\Hom_\C(C,-),\Hom_k(\Ga,k))&\cong
\Hom_k(\Hom_\C(C,-)\otimes_\Ga\Ga,k)\\
&\cong D\Hom_\C(C,-).\qedhere
\end{align*}
\end{proof}

\subsection*{Finding a determinator of a  morphism}
Following Ringel \cite{Ri2011}, an object that determines a morphism
is called a \emph{determinator}. Our first aim is to find for each
morphism in $\C$ a determinator.

\begin{lem}\label{le:det}
  Fix an object $C\in\C$ and set $\Ga=\End_\C(C)$. Let $\a\colon X\to
  Y$ be a morphism in $\C$ and suppose there is an exact sequence
\[\Hom_\C(-,X)\xto{(-,\a)} \Hom_\C(-,Y)\stackrel{\eta}\lto \coind_C I\] 
for some $I\in\mod\Ga$. Then $\a$ is right $C$-determined.
\end{lem}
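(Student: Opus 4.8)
The implication (1) $\Rightarrow$ (2) holds in any category and needs no hypothesis: if $\a'=\a\beta$ for some $\beta$, then $\a'\p=\a(\beta\p)$ factors through $\a$ for every $\p\colon C\to X'$. So the entire content is the reverse implication (2) $\Rightarrow$ (1), and the plan is to deduce it from the objectwise exactness of the given sequence combined with the adjunction formula accompanying \eqref{eq:Yon}.

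First I would spell out what exactness means objectwise. Evaluating
\[\Hom_\C(-,X)\xto{(-,\a)}\Hom_\C(-,Y)\stackrel{\eta}\lto\coind_C I\]
at an arbitrary object $W\in\C$ yields an exact sequence of $k$-modules, so a morphism $\beta\colon W\to Y$ factors through $\a$ if and only if $\eta_W(\beta)=0$. The next step is to rewrite the vanishing of $\eta_W(\beta)$ through the adjoint $\eta_*\in\Hom_\Ga(\Hom_\C(C,Y),I)$ of $\eta$. By the formula displayed after \eqref{eq:Yon} we have $\eta_W(\beta)=\eta_*\comp\Hom_\C(C,\beta)$, so $\beta$ factors through $\a$ if and only if $\eta_*$ vanishes on $\Im\Hom_\C(C,\beta)$.

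The key observation is then that $\eta_*$ annihilates $\Im\Hom_\C(C,\a)$. Indeed, applying the equivalence just obtained to $\beta=\a$ (with $W=X$), the morphism $\a$ trivially factors through itself, whence $\eta_X(\a)=\eta_*\comp\Hom_\C(C,\a)=0$. Now take any $\a'\colon X'\to Y$ satisfying (2); as recorded before Lemma~\ref{le:uni}, this condition reads $\Im\Hom_\C(C,\a')\subseteq\Im\Hom_\C(C,\a)$. Since $\eta_*$ already kills the larger submodule $\Im\Hom_\C(C,\a)$, it kills $\Im\Hom_\C(C,\a')$ as well, so that $\eta_{X'}(\a')=\eta_*\comp\Hom_\C(C,\a')=0$. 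Applying objectwise exactness at $W=X'$ gives that $\a'$ factors through $\a$, which is (1).

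I do not expect a genuine obstacle: the argument is purely formal once the two translations are in place. The only point requiring care is the adjunction bookkeeping, namely transporting the condition $\eta_W(\beta)=0$ across \eqref{eq:Yon} into ``$\eta_*$ vanishes on $\Im\Hom_\C(C,\beta)$'' via $\eta_W(\beta)=\eta_*\comp\Hom_\C(C,\beta)$. Everything else reduces to the remark that condition (2) is exactly the inclusion of images on which $\eta_*$ is already seen to vanish.
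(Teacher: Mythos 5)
Your proof is correct and follows essentially the same route as the paper: exactness of the evaluated sequence gives $\eta\Hom_\C(-,\a)=0$, the adjunction formula accompanying \eqref{eq:Yon} (i.e.\ \eqref{eq:eta}) translates vanishing of $\eta$ on a morphism into vanishing of $\eta_*$ on its image under $\Hom_\C(C,-)$, and the inclusion $\Im\Hom_\C(C,\a')\subseteq\Im\Hom_\C(C,\a)$ from condition (2) then forces $\eta_{X'}(\a')=0$, so exactness yields the factorisation. The paper compresses the bookkeeping into \eqref{eq:eta}, which you have merely unwound explicitly.
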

\begin{proof}
Fix a morphism $\a'\colon X'\to Y$ such that for every morphism
$\p\colon C\to X'$ the composite $\a'\p$ factors through $\a$.  This means
\[\Im\Hom_\C(C,\a')\subseteq \Im\Hom_\C(C,\a).\]
It follows from \eqref{eq:eta} that $\eta\Hom_\C(-,\a')=0$. Thus $\a'$ factors through
$\a$.
\end{proof}

\begin{prop}\label{pr:det-obj}
Let $\a\colon X\to Y$ be a morphism in $\C$ and  suppose there is an
 exact sequence
\[\Hom_\C(-,X)\xto{(-,\a)} \Hom_\C(-,Y)\lto D\Hom_\C(C,-)\] for some object
$C\in\C$. Then $\a$ is right $C$-determined.
\end{prop}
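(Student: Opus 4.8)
The plan is to read this Proposition as an immediate corollary of Lemma~\ref{le:det}, the only work being to recognize the functor $D\Hom_\C(C,-)$ appearing here as a coinduced functor of the shape required there. Concretely, I would set $\Ga=\End_\C(C)$ and take $I=D\Ga$. The decisive input is Lemma~\ref{le:Yon}, which provides a natural isomorphism $\coind_C I\cong D\Hom_\C(C,-)$ for exactly this choice of $I$. Thus the target of the given exact sequence is, up to canonical isomorphism, the coinduced functor $\coind_C I$, and the whole task reduces to feeding this into Lemma~\ref{le:det}.

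First I would verify that $I=D\Ga$ genuinely lies in $\mod\Ga$, since that is the standing hypothesis of Lemma~\ref{le:det}. Because $\C$ is Hom-finite over the commutative artinian ring $k$, the endomorphism ring $\Ga=\End_\C(C)$ has finite length as a $k$-module and is therefore an Artin $k$-algebra; consequently $D\Ga=\Hom_k(\Ga,E)$ is again of finite length and, carrying its natural $\Ga$-module structure, is a finitely presented $\Ga$-module. Next I would transport the given sequence across the isomorphism of Lemma~\ref{le:Yon}: composing the map $\Hom_\C(-,Y)\to D\Hom_\C(C,-)$ with the inverse of that isomorphism produces a morphism $\eta\colon\Hom_\C(-,Y)\to\coind_C I$, and since an isomorphism preserves exactness the sequence
\[\Hom_\C(-,X)\xto{(-,\a)}\Hom_\C(-,Y)\stackrel{\eta}\lto\coind_C I\]
remains exact. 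This is precisely the hypothesis of Lemma~\ref{le:det}, so that lemma yields at once that $\a$ is right $C$-determined.

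I do not anticipate a real obstacle here: the statement is essentially a translation of Lemma~\ref{le:det} through the identification supplied by Lemma~\ref{le:Yon}. The only points requiring genuine care are the bookkeeping checks that $D\Ga$ does land in $\mod\Ga$ and that the isomorphism of Lemma~\ref{le:Yon} is natural enough to be spliced onto the cokernel map without disturbing exactness. Both are routine once the coinduction adjunction \eqref{eq:Yon} and its naturality are in hand, so the proof should be a short two-line deduction rather than an argument with a substantive core.
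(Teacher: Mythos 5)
Your proposal is correct and coincides with the paper's own proof: both identify $D\Hom_\C(C,-)$ with $\coind_C I$ for $I=D\Ga$ via Lemma~\ref{le:Yon} and then apply Lemma~\ref{le:det}. Your extra checks (that $D\Ga$ lies in $\mod\Ga$ and that the isomorphism preserves exactness) are sound bookkeeping the paper leaves implicit.
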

\begin{proof}
  Observe that $ D\Hom_\C(C,-)=\coind_C I$ for $I=D\Ga$ and
  $\Ga=\End_\C(C)$, by Lemma~\ref{le:Yon}. Now apply
  Lemma~\ref{le:det} to see that $\a$ is right determined by $C$.
\end{proof}

\begin{cor}\label{co:det}
Suppose that $\C$ has weak cokernels and $D\Hom_\C(-,C)$ is finitely
presented for each $C\in\C$. Then every morphism in $\C$ is right
determined by an object in $\C$.
\end{cor}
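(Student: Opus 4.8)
The plan is to reduce the statement to Proposition~\ref{pr:det-obj}: for a given morphism $\a\colon X\to Y$ it suffices to produce an object $C\in\C$ together with an exact sequence $\Hom_\C(-,X)\xto{(-,\a)}\Hom_\C(-,Y)\lto D\Hom_\C(C,-)$. Writing $F$ for the cokernel of $\Hom_\C(-,\a)$ in $\mod\C$, I have an exact sequence $\Hom_\C(-,X)\xto{(-,\a)}\Hom_\C(-,Y)\lto F\lto 0$, and the required data amounts to a \emph{monomorphism} $F\to D\Hom_\C(C,-)$: indeed, the kernel of the composite $\Hom_\C(-,Y)\to F\to D\Hom_\C(C,-)$ then equals the kernel of $\Hom_\C(-,Y)\to F$, namely $\Im\Hom_\C(-,\a)$, which is precisely the exactness required above. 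So the whole problem is to embed the finitely presented functor $F$ into some $D\Hom_\C(C,-)$.

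First I would pass to the dual side using the contravariant duality $(\C,\mod k)^\op\xto{\sim}(\C^\op,\mod k)$, $G\mapsto DG$, which is exact since $E$ is injective over $k$, and therefore interchanges kernels with cokernels and monomorphisms with epimorphisms. Applying $D$ to the presentation of $F$ identifies $DF$ with the kernel of the induced map $D\Hom_\C(-,Y)\to D\Hom_\C(-,X)$ between covariant functors. Here the hypotheses enter: by assumption $D\Hom_\C(-,X)$ and $D\Hom_\C(-,Y)$ are finitely presented, i.e.\ objects of $\mod(\C^\op)$, and since $\C$ has weak cokernels the category $\mod(\C^\op)$ is abelian by Lemma~\ref{le:modC} applied to $\C^\op$. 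Hence the kernel $DF$ is again finitely presented. A finitely presented covariant functor is a quotient of a representable one, so there is an epimorphism $\Hom_\C(C,-)\to DF$ for some $C\in\C$. Dualising back and using $DD\cong\Id$ (Matlis duality over the artinian ring $k$) turns this into the desired monomorphism $F\cong DDF\to D\Hom_\C(C,-)$, whereupon Proposition~\ref{pr:det-obj} shows that $\a$ is right $C$-determined.

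The hard part is the finite presentation of $DF$ in the second step; this is exactly where the weak-cokernel hypothesis on $\C$ is needed, via the abelianness of $\mod(\C^\op)$, so that the kernel of a morphism between finitely presented functors stays finitely presented and kernels computed in $\mod(\C^\op)$ agree with those in the ambient functor category. Everything else—the exactness of $D$, the reconstruction $DD\cong\Id$, and the translation between a monomorphism $F\to D\Hom_\C(C,-)$ and the exact sequence required by Proposition~\ref{pr:det-obj}—is formal.
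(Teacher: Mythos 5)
Your proof is correct and is essentially the paper's own argument: the paper's proof deduces from the weak-cokernel hypothesis and Lemma~\ref{le:modC} that $D\Coker\Hom_\C(-,\a)$ is finitely presented, obtains from this a monomorphism $\Coker\Hom_\C(-,\a)\to D\Hom_\C(C,-)$, and concludes via Proposition~\ref{pr:det-obj}. You have merely unwound the compressed steps (exactness of $D$, abelianness of $\mod(\C^\op)$, the epimorphism from a representable, and $DD\cong\Id$) in full detail, all correctly.
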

\begin{proof}
  Fix a morphism $\a$ in $\C$.  The assumptions on $\C$ ensure that
  the functor $D\Coker\Hom_\C(-,\a)$ is finitely presented; see
  Lemma~\ref{le:modC}. This means that there is a monomorphism
  $\Coker\Hom_\C(-,\a)\to D\Hom_\C(C,-)$ for some $C\in\C$.  Thus
$\a$ is right $C$-determined by  Proposition~\ref{pr:det-obj}.
\end{proof}

\subsection*{Finding morphisms  determined by an object}

We construct  morphisms that are determined by a fixed object.

\begin{prop}\label{pr:det-exist}
  Suppose that $\C$ has weak kernels. Fix two objects $C,Y$ in $\C$
  and an $\End_\C(C)$-submodule $H\subseteq\Hom_\C(C,Y)$. Suppose also
  that the functor $D\Hom_\C(C,-)$ is finitely presented. Then there exists a right
  $C$-determined morphism $\a\colon X\to Y$ satisfying
  $\Im\Hom_\C(C,\a)=H$.
\end{prop}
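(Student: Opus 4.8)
The plan is to build the desired morphism $\a\colon X\to Y$ by dualizing the construction sketched in Corollary~\ref{co:det}, turning the submodule $H$ into a map out of $\Hom_\C(-,Y)$ whose kernel, realized as a representable functor, supplies $X$. Concretely, set $\Ga=\End_\C(C)$ and consider the $\Ga$-module quotient $\Hom_\C(C,Y)/H$. I would pick any embedding of $\Hom_\C(C,Y)/H$ into an injective $\Ga$-module $I$, but the cleaner route is to work directly with the coinduction functor: the composite
\[
\Hom_\C(-,Y)\lto\coind_C\Hom_\C(C,Y)\lto\coind_C\bigl(\Hom_\C(C,Y)/H\bigr)=:\coind_C J
\]
gives a natural transformation $\eta\colon\Hom_\C(-,Y)\to\coind_C J$, where the first map is the unit of the adjunction. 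The point of the adjointness isomorphism \eqref{eq:Yon}, together with \eqref{eq:eta}, is that $\eta$ is designed so that a morphism $\p\colon C\to Y$ satisfies $\eta_Y(\mathrm{id})\circ(\text{stuff})$-type vanishing precisely when $\Hom_\C(C,\p)$ lands in $H$; this is what will pin down $\Im\Hom_\C(C,\a)=H$ at the end.

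Next I would produce $X$ and $\a$ as a \emph{weak kernel} of $\eta$ at the level of functors. By the hypothesis that $\C$ has weak kernels, $\mod\C$ is abelian (Lemma~\ref{le:modC}), so I can form the kernel of $\eta$ inside $\mod\C$ — provided $\coind_C J$ is finitely presented. This is exactly where the hypothesis that $D\Hom_\C(C,-)$ is finitely presented enters: by Lemma~\ref{le:Yon} we have $D\Hom_\C(C,-)\cong\coind_C(D\Ga)$, and since $J=\Hom_\C(C,Y)/H$ is a finitely generated (hence, over the artinian ring $\Ga$, finite-length) $\Ga$-module, $\coind_C J$ should be finitely presented too, being built from $\coind_C(D\Ga)$ by the exactness/finiteness properties of coinduction. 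Granting this, $\Ker\eta$ is a finitely presented functor, so it is a quotient of a representable $\Hom_\C(-,X)$, and I obtain
\[
\Hom_\C(-,X)\xto{(-,\a)}\Hom_\C(-,Y)\xto{\eta}\coind_C J
\]
with the left map having image equal to $\Ker\eta$; that is, the sequence is exact in the middle. Lemma~\ref{le:det} then immediately yields that $\a\colon X\to Y$ is right $C$-determined.

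It remains to verify the image condition $\Im\Hom_\C(C,\a)=H$. Evaluating the exact sequence at $C$ gives an exact sequence of $\Ga$-modules $\Hom_\C(C,X)\to\Hom_\C(C,Y)\to(\coind_C J)(C)$, and by the defining adjunction $(\coind_C J)(C)=\Hom_\Ga(\Hom_\C(C,C),J)\cong\Hom_\Ga(\Ga,J)=J=\Hom_\C(C,Y)/H$, under which the right-hand map is the canonical projection. Exactness then forces the image of $\Hom_\C(C,\a)$ to be exactly the kernel $H$ of that projection. This is the step I expect to be the main obstacle: one must check that evaluating $\eta$ at $C$ really recovers the quotient map $\Hom_\C(C,Y)\to\Hom_\C(C,Y)/H$ and not merely some map with the same kernel up to an automorphism, which requires tracking the unit of the coinduction adjunction carefully through \eqref{eq:Yon} and its compatibility \eqref{eq:eta}. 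The finite-presentation claim for $\coind_C J$ is the other place demanding care, since coinduction is only a right adjoint and need not preserve finite presentation in general; here it is rescued by the standing finiteness hypothesis on $D\Hom_\C(C,-)$ and the fact that every object of $\mod\Ga$ has a presentation by copies of $D\Ga$ over the artinian ring $\Ga$.
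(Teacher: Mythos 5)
Your argument has the same skeleton as the paper's proof (a natural transformation $\eta\colon\Hom_\C(-,Y)\to\coind_C(-)$ built from the quotient by $H$, then Lemma~\ref{le:det} for determinedness and evaluation at $C$ for the image condition), with one deviation: the paper first embeds $J=\Hom_\C(C,Y)/H$ into an injective envelope $I$ over $\Ga$ and works with $\coind_C I$, whereas you work with $\coind_C J$ directly. The deviation is legitimate in principle, since Lemma~\ref{le:det} needs no injectivity of the coefficient module, and your worry about the evaluation step is unfounded: by the explicit formula accompanying \eqref{eq:Yon} one has $\eta_C(\p)=\eta_*\Hom_\C(C,\p)$, so under the canonical identification $(\coind_C J)(C)\cong\Hom_\Ga(\Ga,J)\cong J$ the map $\eta_C$ \emph{is} the projection and $\Ker\eta_C=H$ on the nose (and in any case a kernel is insensitive to postcomposition by automorphisms). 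The genuine gap is in your finite-presentation argument for $\coind_C J$: the claim that ``every object of $\mod\Ga$ has a presentation by copies of $D\Ga$'' is false in general --- for $\Ga$ the path algebra of the quiver $A_2$, the simple projective module is not a quotient of any finite sum of copies of $D\Ga$ --- and even where such a presentation exists it would not help, because $\coind_C$ is a right adjoint and hence preserves kernels but not cokernels, so a presentation (a cokernel description) of $J$ says nothing about $\coind_C J$.

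The repair uses the dual datum: since $D\Ga$ is an injective cogenerator of $\mod\Ga$ and $J$ has finite length, $J$ admits an injective copresentation $0\to J\to (D\Ga)^{n_0}\to (D\Ga)^{n_1}$; applying the left exact functor $\coind_C$ and Lemma~\ref{le:Yon} exhibits $\coind_C J$ as the kernel of a morphism between finite sums of copies of the finitely presented functor $D\Hom_\C(C,-)$, and this kernel is finitely presented because $\mod\C$ is abelian by the weak-kernels hypothesis (Lemma~\ref{le:modC}). Note that the paper's injective envelope sidesteps exactly this point: an injective $I\in\mod\Ga$ is a direct summand of a finite sum of copies of $D\Ga$, so $\coind_C I$ is finitely presented by additivity alone. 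The envelope is moreover not cosmetic for the rest of the paper: that the monomorphism $\Coker\Hom_\C(-,\a)\to\coind_C I$ has \emph{injective} (indeed minimal) target is what is invoked later via Remark~\ref{re:uni} in the proofs of Proposition~\ref{pr:min} and Theorem~\ref{th:main}. For the proposition as stated, your variant is a complete proof once the finite-presentation step is patched as above.
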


\begin{rem}\label{re:uni}
  The morphism $X\to Y$ in Proposition~\ref{pr:det-exist} can be
  chosen to be right minimal. This follows from the subsequent
  remark. With this choice, the morphism is unique up to isomorphism,
  by Lemma~\ref{le:uni}.
\end{rem}

\begin{rem}\label{re:min}
Given a morphism $\a\colon X\to Y$ in $\C$, there is a decomposition
$X=X'\amalg X''$ such that $\a|_{X'}$ is right minimal and
$\a|_{X''}=0$. This follows from the fact that the endomorphism ring
of every object in $\C$ is semiperfect.
\end{rem}

\begin{proof}[Proof of Proposition~\ref{pr:det-exist}]
  Choose an injective envelope $\Hom_\C(C,Y)/H\to I$ over
  $\Ga=\End_\C(C)$. The composite
\[\Hom_\C(C,Y)\twoheadrightarrow\Hom_\C(C,Y)/H\to I\]
corresponds under the isomorphism \eqref{eq:Yon} to a morphism
  \[\eta\colon \Hom_\C(-,Y)\to \coind_C I.\]
 
  Next observe that $\coind_C I$ is finitely presented since
  $\coind_C(D\Ga)$ is finitely presented, by the assumption on $C$ and
  Lemma~\ref{le:Yon}.  It follows that the kernel of $\eta$ is
  finitely presented since $\mod\C$ is abelian. Thus there is a
  morphism $\a\colon X\to Y$ which yields an exact sequence
 \begin{equation*}
\Hom_\C(-,X)\xto{(-,\a)} \Hom_\C(-,Y)\stackrel{\eta}\lto \coind_C I.
\end{equation*}
Evaluating this sequence at $C$ shows that $\Im\Hom_\C(C,\a)=H$, and
Lemma~\ref{le:det} shows that $\a$ is determined by $C$.
\end{proof}

\begin{cor}\label{co:dual}
Every dualising variety has right determined morphisms.\qed
\end{cor}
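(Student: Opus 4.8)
The plan is to verify the two conditions in Definition~\ref{de:det} by invoking the characterization of dualising varieties in Lemma~\ref{le:dual}. So the first step is to recall that a dualising $k$-variety $\C$ satisfies: it has weak kernels and weak cokernels, and the functors $D\Hom_\C(-,C)$ and $D\Hom_\C(C,-)$ are finitely presented for every object $C$. These are precisely the hypotheses that feed into the corollaries and propositions just proved, so the corollary should follow by assembling them.

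For condition (2) of Definition~\ref{de:det} --- that every morphism ending in a fixed $Y$ is right determined by some object --- I would apply Corollary~\ref{co:det} directly. That corollary requires $\C$ to have weak cokernels and $D\Hom_\C(-,C)$ to be finitely presented for each $C$, both of which hold in a dualising variety by Lemma~\ref{le:dual}. Hence every morphism in $\C$, in particular every morphism ending in $Y$, is right $C$-determined for a suitable $C$.

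For condition (1) --- that for each object $C$ and each $\End_\C(C)$-submodule $H\subseteq\Hom_\C(C,Y)$ there is a right $C$-determined morphism $\a\colon X\to Y$ with $\Im\Hom_\C(C,\a)=H$ --- I would apply Proposition~\ref{pr:det-exist}. Its hypotheses are that $\C$ has weak kernels and that $D\Hom_\C(C,-)$ is finitely presented, again supplied by Lemma~\ref{le:dual}. This produces exactly the desired morphism.

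Combining the two gives both defining properties, so $\C$ has right determined morphisms. I do not anticipate a genuine obstacle here: the real content has already been extracted into Corollary~\ref{co:det} and Proposition~\ref{pr:det-exist}, and the corollary is essentially a bookkeeping step that matches the structural properties of a dualising variety (as repackaged in Lemma~\ref{le:dual}) against the hypotheses of those two results. The only point requiring a moment's care is confirming that Lemma~\ref{le:dual} indeed yields \emph{both} the weak-kernel and the finite-presentation conditions simultaneously, so that the inputs to both \ref{co:det} and \ref{pr:det-exist} are available at once; this is immediate from the statement of that lemma.
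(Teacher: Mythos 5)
Your proof is correct and is precisely the argument the paper intends (the corollary is stated with an immediate \qed): condition (1) of Definition~\ref{de:det} from Proposition~\ref{pr:det-exist}, condition (2) from Corollary~\ref{co:det}, with the hypotheses of both supplied by the characterization of dualising varieties in Lemma~\ref{le:dual}.
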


\subsection*{Minimal determinators}

Suppose a morphism is determined by two objects $C$ and $C'$. What is
then the relationship between these objects? The following proposition
gives a precise answer. For each $X\in\C$ let
$\add X$ denote the full subcategory consisting of the direct summands of
finite directs sums of copies of $X$.

\begin{prop}\label{pr:min}
Let $\C$ be a dualising variety and $\a\colon X\to Y$ a morphism in
$\C$. Then there exists in $\mod\C$ an injective envelope of the form
\[\Coker\Hom_\C(-,\a)\lto D\Hom_\C(C,-)\]
for some object $C\in\C$.  
Given an object $C'$ in $\C$, the morphism $\a$ is
right $C'$-determined if and only if $\add C\subseteq\add C'$.
\end{prop}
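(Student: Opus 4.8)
The plan is to prove the two assertions in turn, the first producing the distinguished determinator and the second identifying all determinators via an $\add$-comparison.

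For the existence of the injective envelope I would first identify the injective objects of $\mod\C$. Since $\C$ has weak kernels, $\mod\C$ is abelian by Lemma~\ref{le:modC}, and the duality $F\mapsto DF$ from the definition of a dualising variety identifies $(\mod\C)^\op$ with $\mod(\C^\op)$, carrying projectives to injectives. In $\mod(\C^\op)$ the projective objects are exactly the representable functors $\Hom_\C(C,-)$: every finitely generated projective is a summand of a finite sum of representables, and such summands are again representable because $\C$ is idempotent complete. Dually, the injectives of $\mod\C$ are precisely the functors $D\Hom_\C(C,-)$. Now $F:=\Coker\Hom_\C(-,\a)$ lies in $\mod\C$, so $DF$ lies in the Hom-finite $k$-linear, hence Krull--Schmidt, category $\mod(\C^\op)$ and admits a projective cover $\Hom_\C(C,-)\to DF$. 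Dualising a projective cover turns the superfluous kernel into an essential subobject, so it yields the desired injective envelope $F\to D\Hom_\C(C,-)$.

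For the second assertion the key step is a reformulation of right $C'$-determinedness in terms of $F$. Evaluation gives $F(X')=\Hom_\C(X',Y)/\Im\Hom_\C(X',\a)$, so $\a'\colon X'\to Y$ factors through $\a$ if and only if its class $[\a']$ vanishes in $F(X')$, while condition~(2) for $C'$ says precisely that $F(\p)[\a']=[\a'\p]$ vanishes in $F(C')$ for every $\p\colon C'\to X'$. Reading this through the adjunction \eqref{eq:Yon}, the latter means that $[\a']$ lies in the kernel of the unit $u\colon F\to\coind_{C'}(F(C'))$. Since every element of $F(X')$ has the form $[\a']$, I would conclude that $\a$ is right $C'$-determined if and only if $u$ is a monomorphism. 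This is the bridge between the determinacy hypothesis and the functor-theoretic picture.

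The implication $\add C\subseteq\add C'\Rightarrow$ right $C'$-determined is the easy direction. From the envelope of the first part and Proposition~\ref{pr:det-obj} one sees that $\a$ is right $C$-determined; and if $C\in\add C'$, choosing a retraction $C'^{\,n}\to C$ shows that condition~(2) for $C'$ forces condition~(2) for $C$, so right $C$-determinedness upgrades to right $C'$-determinedness.

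The converse is where minimality of the envelope enters, and this is the main obstacle. Assuming $\a$ right $C'$-determined, the monomorphism $u$, an embedding $F(C')\hookrightarrow(D\Ga')^{\,n}$ in $\mod\Ga'$ with $\Ga'=\End_\C(C')$, left exactness of $\coind_{C'}$, and Lemma~\ref{le:Yon} together produce a monomorphism $F\hookrightarrow D\Hom_\C(C',-)^{\,n}$ into an injective object. Because $F\to D\Hom_\C(C,-)$ is an injective envelope, $D\Hom_\C(C,-)$ is a direct summand of $D\Hom_\C(C',-)^{\,n}$. Applying the duality and Yoneda, $\Hom_\C(C,-)$ is a summand of $\Hom_\C(C',-)^{\,n}$, and idempotent completeness of $\C$ turns this into $C\in\add C'$, whence $\add C\subseteq\add C'$. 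The delicate points to verify carefully are exactly that the envelope is a summand of every injective overobject and that a functorial summand of $\Hom_\C(C',-)^{\,n}$ descends to a summand of $C'^{\,n}$ at the level of objects.
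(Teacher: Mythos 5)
Your proposal is correct, and its overall skeleton matches the paper's: the injective envelope is obtained by dualising a projective cover in $\mod(\C^\op)$ (the paper quotes semiperfectness of endomorphism rings via \cite[Proposition~A.1]{Kr2011}, you rederive the same fact through the Krull--Schmidt property of a Hom-finite idempotent complete category), the easy direction uses Proposition~\ref{pr:det-obj} plus a retraction $C'^{\,n}\to C$, and the converse produces a monomorphism from $F=\Coker\Hom_\C(-,\a)$ into an injective object coinduced from $C'$, splits off the envelope, and reads the result back through Yoneda. The genuine difference lies in how that monomorphism is produced. The paper obtains $F\to\coind_{C'}I$ by matching $\a$ against the morphism constructed in the proof of Proposition~\ref{pr:det-exist} and invoking the uniqueness of right minimal right determined morphisms (Remark~\ref{re:uni}, resting on Lemma~\ref{le:uni} and Remark~\ref{re:min}); you instead prove directly that right $C'$-determinedness is equivalent to the unit $u\colon F\to\coind_{C'}F(C')$ being a monomorphism, and then compose with $\coind_{C'}$ applied to an embedding $F(C')\hookrightarrow(D\Ga')^{\,n}$, using left exactness of $\coind_{C'}$ and Lemma~\ref{le:Yon}. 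Your unit criterion is exactly the equivalence (1)$\,\Leftrightarrow\,$(2) of the unlabelled proposition in Section~5, specialised to $\D=\add C'$ --- so you have in effect anticipated the paper's own later ``conceptual explanation'' and used it here to bypass the minimality/uniqueness detour; what the paper's route buys is brevity given the earlier propositions, while yours is self-contained and makes the converse independent of Remark~\ref{re:uni}. One small simplification at your final step: idempotent completeness of $\C$ is not actually needed there, since once the identity of $\Hom_\C(C,-)$ factors through $\Hom_\C(C'^{\,n},-)$, full faithfulness of the Yoneda embedding already exhibits $C$ as a direct summand of $C'^{\,n}$ in $\C$ --- though of course it is a standing hypothesis in any case.
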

\begin{proof}
  The category $\mod\C$ has projective covers since the endomorphism
  ring of each object in $\C$ is semiperfect; see
  \cite[Proposition~A.1]{Kr2011}.  Applying the duality, it follows
  that each object $F$ has an injective envelope of the form $F\to
  D\Hom_\C(C,-)$ for some $C\in\C$.  Now set $F=\Coker\Hom_\C(-,\a)$.
  Then Proposition~\ref{pr:det-obj} shows that $\a$ is right
  $C$-determined.  Given an object $C'\in\C$, it follows that $\a$ is
right $C'$-determined if $\add C\subseteq\add C'$.

Now suppose that $\a$ is right $C'$-determined.  The proof of
Proposition~\ref{pr:det-exist} yields a monomorphism $F\to\coind_{C'}
I$ for some injective $\End_\C(C')$-module $I$.  Here, we use the
uniqueness of a right determined morphism; see Remark~\ref{re:uni}.
From Lemma~\ref{le:Yon} it follows that $\coind_{C'} I$ is a direct
summand of a finite direct sum of copies of $D\Hom_\C(C',-)$.  On the
other hand, the assumption on $C$ implies that $D\Hom_\C(C,-)$ is a
direct summand of $\coind_{C'} I$, since $\coind_{C'} I$ is an
injective object.  Thus $C$ is a direct summand of a finite direct sum
of copies of $C'$.
\end{proof}

\section{Triangulated categories with Serre duality}

Fix a $k$-linear triangulated category $\C$ which is Hom-finite,
essentially small, and idempotent complete. 

Recall from \cite{RvB2002} that a \emph{right Serre functor} is an
additive functor $S\colon\C\to\C$ together with a natural isomorphism
\[\eta_X\colon D\Hom_\C(X,-)\xto{\sim}\Hom_\C(-,SX)\] for all $X\in\C$,
where $D=\Hom_k(-,E(k/\mathfrak r))$. 
A right Serre functor is called a \emph{Serre functor} if it is an
equivalence.

\begin{prop}\label{pr:main}
Consider for the category $\C$ the following conditions:
\begin{enumerate}
\item The category $\C$ admits a right Serre functor  $S\colon\C\to\C$.
\item Given objects $C,Y\in\C$ and an $\End_\C(C)$-submodule
  $H\subseteq\Hom_\C(C,Y)$, there is a right $C$-determined morphism
  $\a\colon X\to Y$ with $\Im\Hom_\C(C,\a)=H$.
\item Every morphism in $\C$ is left determind by an object in $\C$.
\end{enumerate}
Then \emph{(1)} $\iff$ \emph{(2)} $\implies$ \emph{(3)}.
\end{prop}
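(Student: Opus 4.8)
The plan is to treat the two implications out of~(1) by elementary means and to reserve the substantive work for $(2)\Rightarrow(1)$. First I would record two standing facts about $\C$. Since $\C$ is triangulated it has weak kernels and weak cokernels: completing a morphism to a triangle $W\to X\to Y\to\Si W$ exhibits $W\to X$ as a weak kernel of $X\to Y$, and dually; the same applies to the triangulated category $\C^\op$. Secondly, $\C$ is Krull--Schmidt, because $k$ is artinian and $\C$ is Hom-finite, so every endomorphism ring has finite length and is semiperfect, and as $\C$ is idempotent complete every object decomposes into indecomposables with local endomorphism rings.

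For $(1)\Rightarrow(2)$ I would use that a right Serre functor supplies a natural isomorphism $D\Hom_\C(C,-)\cong\Hom_\C(-,SC)$, so this functor is representable and a fortiori finitely presented. As $\C$ has weak kernels, Proposition~\ref{pr:det-exist} then applies to every triple $(C,Y,H)$ and produces exactly the right $C$-determined morphism demanded in~(2). For $(1)\Rightarrow(3)$ I would pass to $\C^\op$, which is again $k$-linear, Hom-finite, essentially small, idempotent complete and triangulated. Its weak cokernels are the weak kernels of $\C$, and $D\Hom_{\C^\op}(-,C)=D\Hom_\C(C,-)$ is finitely presented for every $C$ by~(1); hence Corollary~\ref{co:det} applies to $\C^\op$ and shows that every morphism of $\C^\op$ is right determined by an object, that is, every morphism of $\C$ is left determined by an object. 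Granting $(2)\Rightarrow(1)$, the remaining implication $(2)\Rightarrow(3)$ is then immediate.

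The heart of the argument is $(2)\Rightarrow(1)$, where my plan is to manufacture right almost split morphisms and then invoke Auslander--Reiten theory. Fix an indecomposable $Y$ and set $\Ga=\End_\C(Y)$, a local ring. I would apply~(2) to the object $C=Y$ and to the $\Ga$-submodule $H=\rad\Ga$ of $\Hom_\C(Y,Y)=\Ga$; this yields a right $Y$-determined morphism $\a\colon X\to Y$ with $\Im\Hom_\C(Y,\a)=\rad\Ga$. By Auslander's criterion in Example~\ref{ex:ass}, any such $\a$ is exactly a \emph{right almost split} morphism ending in $Y$. Letting $Y$ range over all indecomposables, I obtain right almost split morphisms---equivalently, right Auslander--Reiten triangles---terminating at every indecomposable object of $\C$.

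The final step, and the one I expect to be the main obstacle, is to convert this family of right almost split morphisms into a right Serre functor. Here I would appeal to the correspondence of Reiten--Van den Bergh \cite{RvB2002}: for a Hom-finite Krull--Schmidt triangulated category, the existence of right Auslander--Reiten triangles ending at each indecomposable is equivalent to the existence of a right Serre functor. The delicate point is the one-sided bookkeeping---one must assemble the individual triangles into a single additive functor $S$ and verify the naturality of $D\Hom_\C(X,-)\cong\Hom_\C(-,SX)$ in $X$, rather than invoking the two-sided Serre equivalence of the classical formulation. If a ready-made one-sided statement is unavailable, I would reconstruct $S$ objectwise from the almost split data on indecomposables, extend it additively via the Krull--Schmidt decomposition, and check functoriality and naturality by hand; this verification, and not the production of almost split morphisms, is where the care is needed.
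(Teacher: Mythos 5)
Your proposal is correct and follows the paper's own proof essentially step for step: (1)$\Rightarrow$(2) via Proposition~\ref{pr:det-exist}, (1)$\Rightarrow$(3) via Corollary~\ref{co:det} applied to $\C^\op$, and (2)$\Rightarrow$(1) by taking $C=Y$ and $H=\rad\Ga$ to produce a right almost split morphism from Example~\ref{ex:ass} and then invoking Reiten--Van den Bergh. The one-sided statement you were worried about is available ready-made --- the paper cites \cite[Proposition~I.2.3]{RvB2002} directly, and the only point you gloss over is the passage from right almost split morphisms to Auslander--Reiten triangles, which the paper handles by first making the morphism right minimal via Remark~\ref{re:min} and then applying \cite[Lemma~2.6]{Kr2000}.
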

\begin{proof}
Observe first that a  triangulated category has weak kernels and cokernels.

That (1) implies (2) follows from Proposition~\ref{pr:det-exist}, and
that  (1) implies (3) follows from Corollary~\ref{co:det}.

It remains to show that (2) implies (1). From
\cite[Proposition~I.2.3]{ RvB2002} it follows that $\C$ has a right
Serre functor if and only if there is an Auslander--Reiten triangle
ending at each indecomposable object of $\C$.  An exact triangle
$X\xto{\a} Y\xto{\b} Z\to$ is by definition an \emph{Auslander--Reiten
  triangle} ending at $Z$, if $\a$ is left almost split and $\b$ is
right almost split. This is equivalent to $\b$ being right minimal and
right almost split, by \cite[Lemma~2.6]{Kr2000}.

Now fix an indecomposable object $Z$ in $\C$. Then $\Ga=\End_\C(Z)$ is
local and there exists a right $Z$-determined morphism $\b\colon Y\to
Z$ such that $\Im\Hom_\C(Z,\b)=\rad\Ga$. We may assume that $\b$ is
right minimal by Remark~\ref{re:min}, and it follws from
Example~\ref{ex:ass} that $\b$ is right almost split.  Completing $\b$
to an exact triangle then gives an Auslander--Reiten triangle ending
at $Z$. It follows that $\C$ has a Serre functor.
\end{proof}

\begin{thm}\label{th:main}
  For a Hom-finite essentially small and idempotent complete $k$-linear
  triangulated category $\C$ the following are equivalent:
\begin{enumerate}
\item The category $\C$ admits a Serre functor  $S\colon\C\xto{\sim}\C$.
\item The category $\C$ is a dualising variety.
\item The category $\C$ has right determined morphisms.
\end{enumerate}
In this case, every morphism in $\C$ with cone $C$ is right determined
by $S^{-1}C$.
\end{thm}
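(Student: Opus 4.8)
The plan is to prove the three conditions equivalent by the cycle $(1)\implies(2)\implies(3)\implies(1)$ and then to deduce the last sentence separately. Throughout I would use that a triangulated category has weak kernels and weak cokernels, so that $\mod\C$ is abelian (Lemma~\ref{le:modC}) with the representable functors as projective generators, together with the elementary fact that a monomorphism in a triangulated category is split; in particular any monomorphism $\Hom_\C(-,Z)\to\Hom_\C(-,Z')$ between representables is a split monomorphism.

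For $(1)\implies(2)$ I would feed the Serre functor into Lemma~\ref{le:dual}. The right Serre isomorphism gives $D\Hom_\C(C,-)\cong\Hom_\C(-,SC)$, and since $S$ is an equivalence the adjunction $\Hom_\C(C,SX)\cong\Hom_\C(S^{-1}C,X)$ yields $D\Hom_\C(-,C)\cong\Hom_\C(S^{-1}C,-)$; both are representable, hence finitely presented, so $\C$ is a dualising variety. The implication $(2)\implies(3)$ is exactly Corollary~\ref{co:dual}.

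The substance is $(3)\implies(1)$. Condition~(1) of Definition~\ref{de:det} is condition~(2) of Proposition~\ref{pr:main}, so we already obtain a right Serre functor $S$, and the task is to promote it to an equivalence using condition~(2) of Definition~\ref{de:det}. First I would observe, by reversing the construction in the proof of Proposition~\ref{pr:det-exist} via \eqref{eq:Yon} and \eqref{eq:eta}, that if $\a\colon X\to Y$ is right $C$-determined then $\Coker\Hom_\C(-,\a)$ embeds into $\coind_C I$ for some injective $\End_\C(C)$-module $I$; by Lemma~\ref{le:Yon} and the right Serre isomorphism $\coind_C I$ is a direct summand of a finite sum of copies of $\Hom_\C(-,SC)$. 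Since every object of $\mod\C$ has the form $\Coker\Hom_\C(-,\a)$, Definition~\ref{de:det}(2) then shows that \emph{every} finitely presented functor embeds into a representable functor. Combined with the splitting of monomorphisms, this forces every representable $\Hom_\C(-,Z)$ to be injective in $\mod\C$: an embedding into a representable splits, and the resulting retraction restricts along any intermediate subobject. Finally I would identify these injectives: the restriction--coinduction adjunction gives $\Hom_{\mod\C}(F,D\Hom_\C(C,-))\cong DF(C)$, so $D\Hom_\C(C,-)\cong\Hom_\C(-,SC)$ is the injective envelope of the simple functor $S_C$; since each indecomposable representable $\Hom_\C(-,Z)$ is an indecomposable injective, it is the injective envelope of its socle $S_W$, whence $Z\cong SW$, so $S$ is dense and therefore a Serre functor. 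I expect this last identification to be the main obstacle: it rests on the claim that an indecomposable injective in $\mod\C$ has an essential simple socle and is the injective envelope of a unique simple functor. This is standard for Hom-finite functor categories over an artinian base, but it is the one point needing care, as representable functors need not have finite length.

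The last sentence is then the cleanest part. Given $\a\colon X\to Y$ with cone $C$, choose a triangle $X\xto{\a}Y\xto{\b}C\to\Si X$. Exactness of the homological functor $\Hom_\C(-,-)$ gives exactness of $\Hom_\C(-,X)\xto{(-,\a)}\Hom_\C(-,Y)\xto{(-,\b)}\Hom_\C(-,C)$ at the middle term, and the right Serre isomorphism applied to $S^{-1}C$ reads $\Hom_\C(-,C)\cong D\Hom_\C(S^{-1}C,-)$. This is precisely an exact sequence $\Hom_\C(-,X)\xto{(-,\a)}\Hom_\C(-,Y)\to D\Hom_\C(S^{-1}C,-)$, so Proposition~\ref{pr:det-obj} applied with the object $S^{-1}C$ shows that $\a$ is right $S^{-1}C$-determined.
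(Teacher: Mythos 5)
Most of your proposal coincides with the paper's proof: (1)$\implies$(2) via Lemma~\ref{le:dual}, (2)$\implies$(3) via Corollary~\ref{co:dual}, the right Serre functor in (3)$\implies$(1) via Proposition~\ref{pr:main}, the embedding of $\Coker\Hom_\C(-,\a)$ into $\coind_C I$ (your direct ``reversal'' of the construction via \eqref{eq:Yon} and \eqref{eq:eta} is, if anything, cleaner than the paper's appeal to uniqueness in Remark~\ref{re:uni}), and the final determinator statement via Proposition~\ref{pr:det-obj}, which is verbatim the paper's argument. The genuine gap is exactly the step you flag: the claim that an indecomposable injective in $\mod\C$ has an essential simple socle is \emph{not} standard in this generality, and quoting it as a fact about Hom-finite functor categories over an artinian base would be circular. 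Take $\C=K^b(\proj\La)$ for a finite-dimensional algebra $\La$ of infinite global dimension: this $\C$ is Hom-finite, essentially small and idempotent complete, representable functors are injective in $\mod\C$ (this holds for \emph{every} triangulated category, via the equivalence $(\mod\C)^\op\simeq\mod(\C^\op)$ -- which is in fact how the paper obtains injectivity, by citation, rather than deriving it from (3) as you do), yet $\C$ admits no Serre functor. If every indecomposable representable were the injective envelope of a simple functor, with the envelope of the simple at $W$ being $D\Hom_\C(W,-)$, your argument would manufacture one. So any correct proof of the socle claim must re-use hypothesis (3); it cannot stand on general functor-category facts, precisely because representables may have infinite length and, a priori, zero socle.

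The irony is that your own step establishing the embedding already closes the gap with no socle theory at all, and this is the paper's route. Apply Definition~\ref{de:det}(2) to the morphism $0\to Y$: it is right determined by some $C$, and since $\Coker\Hom_\C(-,0\to Y)=\Hom_\C(-,Y)$, your embedding yields a monomorphism $\Hom_\C(-,Y)\to\coind_C I$, a direct summand of a finite sum of copies of $\Hom_\C(-,SC)$ by Lemma~\ref{le:Yon}. This monomorphism splits -- either because $\Hom_\C(-,Y)$ is injective (your step, or the paper's citation), or already because monomorphisms between representables split, your own stated tool -- so $Y$ is a direct summand of $S(C^n)$; full faithfulness of the right Serre functor together with idempotent completeness of $\C$ then gives $Y\cong SX$. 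Alternatively, if you wish to retain your classification of the indecomposable injectives, replace the essential-socle argument by Krull--Schmidt: $\mod\C$ is Hom-finite over $k$ and idempotent complete, hence a Krull--Schmidt category, so the indecomposable injective $\Hom_\C(-,Z)$, being a direct summand of a finite sum $\bigoplus_i D\Hom_\C(C_i,-)$ with each $C_i$ indecomposable, is isomorphic to some $D\Hom_\C(C_i,-)\cong\Hom_\C(-,SC_i)$, whence $Z\cong SC_i$. Either repair makes your proof complete; the first is precisely the one in the paper.
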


\begin{proof}
  (1) $\Rightarrow$ (2): A triangulated category has weak kernels and
  cokernels. From the definition of a Serre functor, it follows that
  $D\Hom_\C(-,C)$ and $D\Hom_\C(C,-)$ are finitely presented for each
  $C\in\C$.  Thus $\C$ is a dualising variety by Lemma~\ref{le:dual}.

(2) $\Rightarrow$ (3): Apply Corollary~\ref{co:dual}.

(3) $\Rightarrow$ (1): From Proposition~\ref{pr:main} we know that
there is a right Serre functor $S\colon\C\to\C$, and it remains to
show that $S$ is an equivalence. In fact, it suffices to show that $S$
is essentially surjective on objects, since $S$ is automatically fully
faithful. Choose an object $Y$ and suppose the morphism $0\to Y$ is
right determined by some object $C$. The proof of
Proposition~\ref{pr:det-exist} yields a monomorphism
$\eta\colon\Hom_\C(-Y)\to\coind_C I$ for some injective
$\End_\C(C)$-module $I$.  Here, we use the uniqueness of a right
determined morphism; see Remark~\ref{re:uni}.  From Lemma~\ref{le:Yon}
it follows that $\coind_C I$ is a direct summand of a finite direct
sum of copies of $D\Hom_\C(C,-)\cong\Hom_\C(-,SC)$. Now one uses that
$\Hom_\C(-Y)$ is an injective object in $\mod\C$; this follows from a
direct argument  \cite[Lemma~1.6]{Kr2000a} or the fact that for every triangulated
category $\C$ the assignment $\Hom_\C(-,X)\mapsto\Hom_\C(X,-)$ induces
an equivalence
\[(\mod\C)^\op\stackrel{\sim}\lto\mod(\C^\op).\] Thus $\eta$ is a
split monomorphism, and it follows that $Y$ is a direct summand of
some object in the image of $S$. Using that $\C$ is idempotent
complete, it follows that $Y\cong SX$ for some $X\in\C$.

To complete the proof, we fix an exact triangle $X\xto{\a} Y\xto{}
SC\to$ and claim that $\a$ is right determined by $C$. This
an immediate consequence of Proposition~\ref{pr:det-obj}, since the
triangle induces an exact sequence
\[\Hom_\C(-,X)\xto{(-,\a)} \Hom_\C(-,Y)\lto D\Hom_\C(C,-).\qedhere\]
\end{proof}

\section{A generalisation}

In this section we generalise Auslander's definition of a right
determined morphism as follows; see also \cite[\S4]{Kr2000}.

\begin{defn}
  A morphism $\a\colon X\to Y$ in a category $\C$ is said to be
  \emph{right determined} by a class $\D$ of objects of $\C$ if for
  every morphism $\a'\colon X'\to Y$ the following conditions are
  equivalent:
\begin{enumerate}
\item The morphism $\a'$ factors through $\a$.
\item For every morphism $\p\colon C\to X'$ with $C\in\D$ the composite $\a'\p$
  factors through $\a$.
\end{enumerate}
\end{defn}

Let $\C$ be an essentially small additive category. We denote by
$\Mod\C$ the category of additive functors $\C^\op\to\Ab$. Given a
full additive subcategory $\D$ of $\C$, we consider the restriction
functor $\res_\D\colon\Mod\C\to\Mod\D$ and its right adjoint
$\coind_\D\colon\Mod\D\to\Mod\C$ with
\[(\coind_\D F)(X)=\Hom_\D(\res_\D\Hom_\C(-,X),F)\]
for $F\in\Mod\D$ and $X\in\C$.

Fix a morphism $\a\colon X\to Y$ in $\C$. Then a morphism $\a'\colon
X'\to Y$ factors through $\a$ if and only if
\begin{equation}\label{eq:det1}
\Im\Hom_\C(-,\a')\subseteq\Im\Hom_\C(-,\a).
\end{equation}
This condition implies
\begin{equation}\label{eq:det2}
\res_\D\Im\Hom_\C(-,\a')\subseteq\res_\D\Im\Hom_\C(-,\a).
\end{equation}
Reformulating the above definition, the morphism $\a$ is determined by
$\D$ if and only if \eqref{eq:det1} and \eqref{eq:det2} are equivalent
for all $\a'\colon X'\to Y$.

The following proposition characterises the morphisms that are right
determined by a fixed class of objects. This provides a conceptual
explanation for some of the previous results.

\begin{prop}
  Let $\C$ be an essentially small additive category and $\D$ a full
  additive subcategory. For a morphism $\a\colon X\to Y$ the following are
  equivalent:
\begin{enumerate}
\item The morphism $\a$ is right determined by $\D$.
\item For $F=\Coker\Hom_\C(-,\a)$ the canonical morphism $F\to
  \coind_\D\res_\D F$ is a monomorphism.
\item For some $I\in\Mod \D$ there is an exact sequence
\[\Hom_\C(-,X)\xto{(-,\a)}\Hom_\C(-,Y)\lto\coind_\D I.\]
\end{enumerate}
\end{prop}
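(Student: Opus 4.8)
The plan is to reduce all three conditions to statements about the single functor $F=\Coker\Hom_\C(-,\a)$ in $\Mod\C$ and the unit of the adjunction $(\res_\D,\coind_\D)$. Write $\pi\colon\Hom_\C(-,Y)\to F$ for the canonical projection. Since cokernels in $\Mod\C$ are computed objectwise, each component $\pi_{X'}\colon\Hom_\C(X',Y)\to F(X')$ is surjective with kernel $\Im\Hom_\C(X',\a)$; in particular, by Yoneda, the condition \eqref{eq:det1} that $\a'\colon X'\to Y$ factors through $\a$ is equivalent to $\pi_{X'}(\a')=0$. I would then establish $(1)\iff(2)$ and $(2)\iff(3)$ separately.

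For $(1)\iff(2)$ the substantive step is to identify the kernel of the unit $\eta_F\colon F\to\coind_\D\res_\D F$ objectwise. Unwinding the formula for $\coind_\D$ together with Yoneda, the component $(\eta_F)_{X'}$ sends $f\in F(X')$ to $\res_\D\tilde f$, where $\tilde f\colon\Hom_\C(-,X')\to F$ corresponds to $f$; hence $(\Ker\eta_F)(X')=\{f\in F(X')\mid F(\p)(f)=0\text{ for all }\p\colon C\to X' \text{ with } C\in\D\}$. Now set $f=\pi_{X'}(\a')$. Naturality of $\pi$ gives $\pi_C(\a'\p)=F(\p)(\pi_{X'}(\a'))$, so the condition \eqref{eq:det2} that every composite $\a'\p$ (with $C\in\D$) factors through $\a$ is exactly the assertion $\pi_{X'}(\a')\in(\Ker\eta_F)(X')$. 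Because $\pi$ is objectwise surjective, the elements $\pi_{X'}(\a')$ exhaust $F(X')$ as $\a'$ ranges over all morphisms into $Y$; therefore the determination property, namely \eqref{eq:det2}$\Rightarrow$\eqref{eq:det1} for all $\a'$, says precisely that every element of $\Ker\eta_F$ vanishes, i.e.\ that $\eta_F$ is a monomorphism. This is $(1)\iff(2)$.

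For $(2)\Rightarrow(3)$ I would take $I=\res_\D F$ and form the composite $\Hom_\C(-,Y)\xto{\pi}F\xto{\eta_F}\coind_\D I$; as $\eta_F$ is a monomorphism its kernel equals $\Ker\pi=\Im\Hom_\C(-,\a)$, yielding the required exact sequence. For $(3)\Rightarrow(2)$, given an exact sequence ending in $\coind_\D I$, the map $\eta\colon\Hom_\C(-,Y)\to\coind_\D I$ kills $\Im\Hom_\C(-,\a)=\Ker\pi$, so it factors as $\eta=\bar\eta\circ\pi$ with $\bar\eta\colon F\to\coind_\D I$; surjectivity of $\pi$ together with exactness forces $\bar\eta$ to be a monomorphism. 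The adjunction transpose then writes $\bar\eta=(\coind_\D\psi)\circ\eta_F$ for the corresponding $\psi\colon\res_\D F\to I$, and since a composite ending in a monomorphism has monomorphic first factor, $\eta_F$ is a monomorphism, which is $(2)$.

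The genuinely load-bearing step is the objectwise computation of $\Ker\eta_F$ in the second paragraph: the whole argument turns on matching the quantifier ``for every $\p\colon C\to X'$ with $C\in\D$'' in the definition of determination with membership in this kernel, and on the observation that objectwise surjectivity of $\pi$ lets one quantify over elements of $F$ rather than over morphisms $\a'$. Once that identification is in place, the remaining implications are formal consequences of the $(\res_\D,\coind_\D)$ adjunction, so I expect no further obstacle.
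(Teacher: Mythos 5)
Your proof is correct and follows essentially the same route as the paper: both reduce the determination property to the objectwise injectivity of the unit $F\to\coind_\D\res_\D F$ (your explicit kernel computation via Yoneda and naturality of $\pi$ is exactly the content of the paper's observation that $\eta_C(\bar\p)=0$ iff $\res_\D\Im\Hom_\C(-,\p)\subseteq\res_\D\Im\Hom_\C(-,\a)$, combined with surjectivity of $\pi$), take $I=\res_\D F$ for (2) $\Rightarrow$ (3), and deduce (3) $\Rightarrow$ (2) by factoring the given monomorphism through the unit. The only difference is that you spell out the objectwise details the paper leaves implicit.
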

\begin{proof}
  (1) $\Leftrightarrow$ (2): A morphism $\p\colon C\to Y$ in $\C$
  yields an element $\bar\p\in F(C)$, and $\bar\p=0$ if and only if
\[\Im\Hom_\C(-,\p)\subseteq\Im\Hom_\C(-,\a).\] 
Now consider the canonical morphism $\eta\colon F\to
\coind_\D\res_\D F$ and observe that $\eta_C(\bar\p)=0$ if and only if
\[\res_\D\Im\Hom_\C(-,\p)\subseteq\res_\D\Im\Hom_\C(-,\a).\] 
Thus $\a$ is right determined by $\D$ if and only if 
$\eta_C$ is a monomorphism for all $C\in\C$.

(2) $\Rightarrow$ (3): Take $I=\res_\D F$.

(3) $\Rightarrow$ (2): Every morphism $\theta\colon F\to \coind_\D I$
factors through the canonical morphism $\eta\colon F\to
\coind_\D\res_\D F$. It follows that $\eta$ is a monomorphism if $\theta$
is a monomorphism.
\end{proof}

The general definition of a morphism determined by a class of objects
suggests the following question.

\begin{question}
  Given a morphism in some category $\C$, is there a \emph{minimal} class
  $\D$ of objects of $\C$ such that the morphism is right
  $\D$-determined?
\end{question}

We have seen in Proposition~\ref{pr:min} that such minimal
determinators always exist for dualising varieties. Next we discuss
a classical problem from stable homotopy theory. It turns out that
Freyd's generating hypothesis predicts a  determinator for a
particular class of morphisms.

\section{Freyd's generating hypothesis}

We consider the stable homotopy category of spectra and the
set $\Sc= \{\Sigma^n S\mid n\in\bbZ\}$ formed by the
suspensions of the sphere spectrum $S$.

\begin{thm}
The following conditions are equivalent:
\begin{enumerate}
\item Freyd's generating hypothesis holds, that is, for every finite
  spectrum $Y$, the morphism $0\to Y$ is right $\Sc$-determined as
  morphism in the category of finite spectra \cite[\S9]{Fr1966} .
\item For every finite torsion spectrum $Y$,
 the morphism $0\to Y$ is right $\Sc$-determined as  morphism in
 the category of finite spectra.
\item For every finite torsion spectrum $Y$,
  the morphism $0\to Y$ is  right $\Sc$-determined as morphism in the
  category of all spectra.
\end{enumerate}
\end{thm}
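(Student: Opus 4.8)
The plan is to begin by unpacking what it means for the morphism $0\to Y$ to be right $\Sc$-determined. Since $[\Sigma^n S,X']\cong\pi_n X'$ and since a morphism $\a'\colon X'\to Y$ factors through $0\to Y$ precisely when $\a'=0$, condition~(2) of the definition for the class $\D=\Sc$ reads: for every $\varphi\colon\Sigma^n S\to X'$ the composite $\a'\varphi$ vanishes, i.e. $\a'$ induces the zero map on all stable homotopy groups. Hence ``$0\to Y$ is right $\Sc$-determined'' says exactly that a morphism ending in $Y$ is zero as soon as it is zero on homotopy; the three statements are instances of this single faithfulness assertion, differing only in whether $Y$ ranges over all finite spectra or only finite torsion spectra, and whether the source ranges over finite spectra or over all spectra. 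With this reformulation two implications are immediate: a finite torsion spectrum is in particular a finite spectrum, so (1)$\Rightarrow$(2); and (3) quantifies over a strictly larger class of sources than (2), so (3)$\Rightarrow$(2).

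The heart of the matter is the implication (2)$\Rightarrow$(1), and this is the step I expect to be the main obstacle. Fix finite spectra $X,Y$ and a morphism $\a\colon X\to Y$ that is zero on homotopy; I want to conclude $\a=0$. For each integer $m\ge 1$ I would consider the triangle $Y\xto{m}Y\xto{\rho_m}Y/m\to\Sigma Y$, where $Y/m:=\cone(m\cdot\id_Y)$ is a \emph{finite torsion} spectrum (it is finite, and its rationalisation vanishes because multiplication by $m$ is a rational equivalence). The composite $\rho_m\a\colon X\to Y/m$ is again zero on homotopy, so statement~(2), applied to the finite torsion spectrum $Y/m$, forces $\rho_m\a=0$. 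Applying $[X,-]$ to the triangle, exactness then places $\a$ in the image of multiplication by $m$, that is $\a\in m\,[X,Y]$. As $m$ is arbitrary, $\a$ is divisible by every positive integer in the abelian group $[X,Y]$. But $[X,Y]$ is finitely generated, and a finitely generated abelian group has no nonzero element divisible by all integers; hence $\a=0$. This divisibility trick — converting the torsion hypothesis (2) into infinite divisibility and then invoking finite generation — is what I expect to carry the theorem.

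It then remains to deduce (3) from (2), which together with the trivial implications closes the cycle. I would fix a finite torsion spectrum $Y$, an arbitrary spectrum $X'=\colim_i X_i$ written as a homotopy colimit of finite spectra $X_i$, and a morphism $\a'\colon X'\to Y$ that is zero on homotopy. Each restriction $\a'|_{X_i}$ is zero on homotopy and has finite source and finite torsion target, so (2) gives $\a'|_{X_i}=0$; thus $\a'$ maps to $0$ in $\varprojlim_i[X_i,Y]$, and by the Milnor exact sequence $\a'$ lies in the term $\varprojlim^1_i[\Sigma X_i,Y]$. Since $Y$ is finite torsion and each $X_i$ is finite, every group $[\Sigma X_i,Y]$ is finite, so the tower is Mittag--Leffler and this $\varprojlim^1$ vanishes, giving $\a'=0$. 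Combined with (1)$\Rightarrow$(2), (2)$\Rightarrow$(1) and (3)$\Rightarrow$(2) this yields (1)$\Leftrightarrow$(2)$\Leftrightarrow$(3). The only routine points left to check are the finiteness of $Y/m$ and of $[\Sigma X_i,Y]$ and the finite generation of $[X,Y]$ for finite spectra, all of which follow from Serre finiteness of the stable homotopy groups of spheres.
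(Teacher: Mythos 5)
Your reformulation of right $\Sc$-determinedness of $0\to Y$ as the statement ``a morphism into $Y$ vanishing on all stable homotopy groups is zero'' is exactly right, and the implications (1)$\Rightarrow$(2) and (3)$\Rightarrow$(2) are indeed trivial. Your proof of (2)$\Rightarrow$(1) is correct and is genuinely different from the paper's: the paper simply cites Freyd's result that finite torsion spectra cogenerate the finite spectra \cite[Proposition~6.8]{Fr1966}, whereas your divisibility argument (kill $\rho_m\a$ for $Y/m=\cone(m\cdot\id_Y)$, deduce $\a\in\bigcap_m m\,[X,Y]=0$ by Serre finiteness) is a self-contained proof of precisely that cogeneration statement. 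That part I would accept as written.

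The gap is in (2)$\Rightarrow$(3). The Milnor exact sequence
$0\to\varprojlim^1[\Sigma X_i,Y]\to[X',Y]\to\varprojlim[X_i,Y]\to 0$
is valid only when $X'$ is a \emph{sequential} homotopy colimit (a countable telescope). An arbitrary spectrum is a \emph{filtered} homotopy colimit of its finite subspectra, and when $X'$ has uncountably many cells it need not be equivalent to any countable telescope of finite spectra; for such $X'$ there is no Milnor sequence, only higher derived-limit obstructions, and the Mittag--Leffler argument does not apply as stated. What you actually need is that there are no nonzero phantom maps into a finite torsion spectrum $Y$, i.e.\ that a nonzero morphism $X'\to Y$ remains nonzero after composition with some $F\to X'$ with $F$ finite. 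The paper obtains this from endofiniteness: $\Hom(F,Y)$ has finite length as an $\End(Y)$-module for every finite $F$, so $Y$ is endofinite, and \cite[Theorem~1.2]{Kr1999} then yields the detection on finite spectra; after that, hypothesis (2) produces the required sphere $\Sigma^nS\to F\to X'$ with nonzero composite to $Y$. Your finiteness-of-$[\Sigma X_i,Y]$ heuristic is the right intuition behind why phantoms vanish here, but as written your argument only proves (2)$\Rightarrow$(3) for countable source spectra; to cover all spectra you must either invoke endofiniteness/pure-injectivity of $Y$ as the paper does, or supply an argument controlling the derived limits of the full filtered system.
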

\begin{proof}
  (1) $\Leftrightarrow$ (2): One direction is clear, and the other
  follows from the fact that the torsion spectra cogenerate the
  category of finite spectra; see \cite[Proposition~6.8]{Fr1966}.

  (2) $\Leftrightarrow$ (3): One needs to show that $0\to Y$ is a
  right determined morphism in the category of all spectra if it is
  right determined in the category of finite spectra. Observe first
  that every finite torsion spectrum $Y$ is \emph{endofinite}
  \cite[\S1]{Kr1999}. More precisely, $\Hom(F,Y)$ has finite length as
  an $\End(Y)$-module for each finite spectrum $F$. It follows from
  \cite[Theorem~1.2]{Kr1999} that for each non-zero morphism $X\to Y$
  the induced map $\Hom(F,X)\to\Hom(F,Y)$ is non-zero for some finite
  spectrum $F$. Thus there is some non-zero morphism $F\to X\to Y$.
  If (2) holds, this implies that for some $n\in\bbZ$
  there is a morphism $\Sigma^nS\to F$ such that the composite
$\Sigma^nS\to F\to X\to Y$ is non-zero. Thus the
  morphism $0\to Y$ is determined by $\Sc$.
\end{proof}

\subsection*{Acknowledgement} 
Some 20 years ago, Maurice Auslander encouraged me (then a postdoc at
Brandeis University) to read his Philadelphia notes \cite{Au1978},
commenting that they had never really been used. More recently,
postdocs at Bielefeld asked me to explain this material; I am grateful
to both of them. Special thanks goes to Greg Stevenson for helpful
discussions and comments on a preliminary version of this paper.

\end{document}